\def\cS{\mathsf{S}}
\def\cW{\mathcal{W}}
\def\cQ{\mathcal{Q}}
\title[Topological recursion of monotone orbifold Hurwitz numbers]{Topological recursion for monotone orbifold Hurwitz numbers: a proof of the Do-Karev conjecture}
\author[R.~Kramer]{R.~Kramer}
\address[R.~Kramer]{Max-Planck-Institut f\"ur Mathematik, Vivatsgasse 7, 53111 Bonn, Germany}
\email{rkramer@mpim-bonn.mpg.de}
\author[A.~Popolitov]{A.~Popolitov}
\address[A.~Popolitov]{Moscow Institute for Physics and Technology, Dolgoprudny, Russia; Institute for Information Transmission Problems, Moscow 127994, Russia; and ITEP, Moscow 117218, Russia}
\email{popolit@gmail.com}
\author[S.~Shadrin]{S.~Shadrin}
\address[S. Shadrin]{Korteweg-de Vriesinstituut voor Wiskunde, 
	Universiteit van Amsterdam, Postbus 94248,
	1090GE Amsterdam, The Netherlands}
\email{s.shadrin@uva.nl}
\subjclass[2020]{14H10, 05A15, 14N10}
\let\runauthor\@author
\let\runtitle\@title
\newcommand{\delete}{\bgroup\markoverwith{\textcolor{ForestGreen}{\rule[0.5ex]{2pt}{0.4pt}}}\ULon}
\newcommand{\deleteS}{\bgroup\markoverwith{\textcolor{green}{\rule[0.5ex]{2pt}{0.4pt}}}\ULon}
\newcommand{\mc}[1]{\mathcal{#1}}
\DeclareMathOperator{\Res}{Res}
\newtheorem{theorem}{Theorem}[section]
\newtheorem{proposition}[theorem]{Proposition}
\newtheorem{corollary}[theorem]{Corollary}
\newtheorem{lemma}[theorem]{Lemma}
\theoremstyle{definition}
\newtheorem{remark}[theorem]{Remark}
\begin{document}

\begin{abstract}
We prove the conjecture of Do and Karev that the monotone orbifold Hurwitz numbers satisfy the Chekhov-Eynard-Orantin topological recursion.
\end{abstract}

\maketitle

\tableofcontents

\section{Introduction} 

\subsection{Monotone orbifold Hurwitz numbers} A sequence of transpositions $\tau_1,\dots,\tau_m\in S_d$, $\tau_i=(a_i,b_i)$, $a_i<b_i$, $i=1,\dots,m$, is called monotone if $b_1\leq b_2\leq \cdots\leq b_m$.
For the entire paper, fix a positive integer $q$. The disconnected monotone $q$-orbifold Hurwitz numbers $h_{g,\mu}^\bullet$, $\mu=(\mu_1,\dots,\mu_\ell)$ are defined as 
\begin{equation}
h_{g,\mu}^\bullet \coloneqq \frac{|\mathrm{Aut}(\mu)|}{|\mu|!} \left| 
\left\{
(\tau_0,\tau_1,\dots,\tau_m) \, \Bigg|\,
\begin{array}{c}
\tau_i\in S_{|\mu|}, \tau_0\tau_1\cdots\tau_m \in C_\mu, \tau_0\in C_{(q,\dots,q)}, \\
m=2g-2+\ell+\frac{|\mu|}{q}, \text{ and} \\
\tau_1,\dots,\tau_m \text{ is a monotone sequence of transpositions}
\end{array}
\right\}
\right| 
\,.
\end{equation}
Here, $ |\mu | = \sum_{i=1}^\ell \mu_i$ and $ \mathrm{Aut} (\mu) = \{ \sigma \in S_\ell \mid \mu_j = \mu_{\sigma (j)} \forall j\} $.\par
The connected monotone Hurwitz numbers $h_{g,\mu}^\circ$ are defined by the same formula, but with an extra addition that $\tau_0,\tau_1,\dots,\tau_m$ generate a transitive subgroup of $S_{|\mu|}$.

The double monotone Hurwitz numbers were first introduced by Goulden, Guay-Paquet, and Novak, `double' allowing for $\tau_0$ to be any permutation, in~\cite{GouldenHCIZ} in their study of the HCIZ integral, and their orbifold, i.e. $ \tau_0 \in C_{(q,\dotsc,q)}$, version that we study in this paper was first considered explicitly as an object of research by Do and Karev in~\cite{DoKarev}. These numbers were very intensively studied in the recent years due to their rich system of connections to integrability, combinatorics, representation theory, and geometry, see e.g.~\cite{GouldenPolynomiality,GouldenetalGenus0,GuayHarnad,HarnadOrlov,ALS,HahnKramerLewanski,Hahn,ACEH}.

\subsection{Topological recursion} The topological recursion of Chekhov, Eynard, and Orantin~\cite{EynardOrantin} is a recursive procedure that associates to some initial data on a Riemann surface $\Sigma$ a sequence of meromorphic differentials $\omega_{g,n}$ on $\Sigma^{\times n}$. The initial data consist of $\Sigma$ itself, two non-constant meromorphic functions $x$ and $y$ on $\Sigma$, and a choice of a symmetric bi-differential $B$ on $\Sigma^{\times 2}$ with a double pole with bi-residue 1 on the diagonal.

We assume that $x$ has simple critical points $p_1,\dots,p_s\in\Sigma$, and by $\sigma_i$ we denote the local deck transformation for $x$ near the point $p_i$.  We also assume that the $ p_i$ are not critical points of $y$. We use the variables $z_i$ as the placeholders for the arguments of the differential forms to stress dependence on the point of the curve, and we denote by $z_I$ the set of variables with indices in the set $I$. Finally, $\llbracket n \rrbracket$ denotes the set $\{1,\dots, n\}$.

The topological recursion works as follows: first define $\omega_{0,1}\coloneqq ydx$, $\omega_{0,2}\coloneqq B$, and for $2g-2+n+1>0$ 
\begin{align}
\omega_{g,n+1}(z_0,z_{\llbracket n\rrbracket}) \coloneqq \frac 12\sum_{i=1}^s \mathop{\Res}\limits_{z\to p_i} \frac{\int_{z}^{\sigma_iz} B(\cdot, z_0)}{ydx(\sigma_iz)-ydx(z)}\Bigg[ \omega_{g-1,n+2}(z,\sigma_iz,z_{\llbracket n\rrbracket}) \\ \notag
\sum_{\substack{g_1+g_2=g,\ I_1\sqcup I_2 = \llbracket n \rrbracket \\ (g_1,|I_1|)\neq (0,0)\neq (g_2,|I_2|)}} \omega_{g_1,1+|I_1|}(z,z_I)\omega_{g_2,1+|I_2|}(\sigma_iz, z_{I_2}) \Bigg]\,.
\end{align}

Originally, this procedure was designed to compute the cumulants of some class of matrix models~\cite{ChekhovEynard}, but since then it has evolved a lot and nowadays it is intensively studied on the crossroads of enumerative geometry, integrable systems, and mirror symmetry, see e.g.~\cite{EynardBook,LiuMulase} for a survey of applications. In particular, it is the key ingredient of the so-called remodeling of the B-model conjecture proposed in~\cite{BKMP}, which suggests that topological recursion is the right version of the B-model for a class of enumerative problems, in the context of mirror symmetry theory.

\subsection{The Do-Karev conjecture} 
Denote by $H_{g,n}$ the $n$-point generating function for the connected $q$-orbifold monotone Hurwitz numbers:
\begin{equation}
H_{g,n}(x_1,\dots,x_n)\coloneqq \sum_{\mu_1,\dots,\mu_n=1}^\infty h^\circ_{g,\mu_1,\dots,\mu_n} \prod_{i=1}^n x_i^{\mu_i}\,.
\end{equation}

Consider the spectral curve data given by $\Sigma=\mathbb{C}$, $x(z)=z(1-z^{q})$ and $y(z)= z^{q-1}/(1-z^q)$, $B(z_1,z_2) = dz_1dz_2/(z_1-z_2)^2$ (our definition of $y$ differs by a sign from the one in~\cite{DoKarev} since we use a different sign in the definition of the recursion kernel than \emph{op.~cit.}). The critical points of $x(z)$ are $p_j = (q+1)^{-1/q} \exp(2\pi\sqrt{-1} j/q)$, $j=1,\dots,q$.

Consider the symmetric multi-differentials $\omega_{g,n}(z_1,\dots,z_n)$, $g\geq  0$, $n\geq 1$, defined on $\mathbb{C}^n$ by the Chekhov-Eynard-Orantin topological recursion. The conjecture of Do-Karev claims that
\begin{equation}
\omega_{g,n}(z_1,\dots,z_n) = d_1\otimes\cdots \otimes d_n H_{g,n}(x_1,\dots,x_n)\,,
\end{equation}
where we consider the Taylor series expansion near $x_1=\cdots=x_n=0$ and substitute $ x_i \to x(z_i)$. This conjecture is proved for $(g,n)=(0,1)$ in~\cite{DoKarev} and for $(g,n)=(0,2)$ in~\cite{KLS} and in an unpublished work of Karev. It is also proved in~\cite{DoDyerMathews,DBKraPS} for all $(g,n)$ in the case $q=1$. In this paper we prove it in the general case:
\begin{theorem} The conjecture of Do-Karev holds.
\end{theorem}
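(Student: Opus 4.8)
The plan is to reduce the statement to the now-standard characterization of topological recursion: a collection of symmetric differentials $\omega_{g,n}$ on a spectral curve with simple ramification coincides with the output of the Chekhov--Eynard--Orantin recursion if and only if, in addition to matching $\omega_{0,1}=y\,dx$ and $\omega_{0,2}=B$, they satisfy the linear and quadratic loop equations at each critical point $p_j$ and have poles only at the $p_j$ (the projection property). Thus the task splits into (i) producing a closed, manageable expression for the generating differentials of the monotone orbifold Hurwitz numbers, and (ii) checking the loop equations and the pole structure against this expression.

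For step (i), I would pass to the semi-infinite wedge (fermionic) formalism. The disconnected generating series $Z=\sum_{g,\mu} h^\bullet_{g,\mu}\,\hbar^{2g-2+\ell}\,p_\mu$ is a hypergeometric (Orlov--Scherbin) KP tau function: the monotone condition $b_1\le\cdots\le b_m$ contributes the content weight given by the product of $(1-\hbar c)^{-1}$ over the contents $c$ of the cells of the partition $\lambda$ indexing the relevant irreducible representation, while the orbifold constraint $\tau_0\in C_{(q,\dots,q)}$ specializes the second family of times to the single power sum $p_q$. Feeding this data into the closed formula for the connected $n$-point functions of such tau functions expresses them as sums, over set partitions of the $n$ arguments, of residues of products of a single universal building block, and exhibits the natural variable in which everything rationalizes. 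I would then verify that this variable is exactly the spectral-curve coordinate $z$, and that the genus-zero one- and two-point data reproduce $x(z)=z(1-z^q)$, $y(z)=z^{q-1}/(1-z^q)$ and $B=dz_1dz_2/(z_1-z_2)^2$, consistently with the already established $(0,1)$ and $(0,2)$ cases.

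For step (ii), symmetry of the $\omega_{g,n}$ is manifest from the closed formula, and the projection property follows once one checks that, after the change of variables, the building block is rational in $z$ with poles located only at the $q$ critical points $p_j$; here the $\mathbb{Z}/q$ symmetry of the curve organizes the $q$ ramification points and their local deck transformations $\sigma_j$. The linear loop equations should reduce to a parity statement for the principal part of the formula under $z\mapsto\sigma_j z$.

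The hard part will be the quadratic loop equations: they demand that the specific combination appearing in the recursion kernel, once the fused and the separated contributions are summed, is holomorphic at each $p_j$. In the presence of both the orbifold ($q$-fold) structure and the monotone weight $(1-\hbar c)^{-1}$, the relevant algebraic identity is considerably more intricate than in the $q=1$ case treated in \cite{DoDyerMathews,DBKraPS}, and I expect the main effort to lie in extracting it from the commutation relations of the monotone cut-and-join operator and in controlling the resulting residues uniformly in $g$ and $n$. Once the loop equations and the pole structure are in place, the reconstruction theorem identifies the $\omega_{g,n}$ with the topological recursion output for the stated spectral curve, completing the proof.
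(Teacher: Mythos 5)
Your overall reduction coincides with the paper's: both rest on the Borot--Shadrin-type reconstruction theorem, so that after the known $(0,1)$ and $(0,2)$ cases and the rationality/pole structure of the $H_{g,n}$ (which the paper simply quotes from the earlier work of Kramer--Lewa\'nski--Shadrin rather than re-deriving from a fermionic $n$-point formula --- your step (i) is a legitimate but different route to that input), everything comes down to the linear and quadratic loop equations. The linear loop equations are indeed the parity statement you describe.

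The genuine gap is that your proposal stops exactly where the paper's work begins: you declare the quadratic loop equations to be ``the hard part'' and say you \emph{expect} the effort to lie in the commutation relations of the monotone cut-and-join operator, but you give no argument, and the step is not routine. The monotone weight $\prod_{\square\in\lambda}(1-\hbar\,\mathsf{cr}^\lambda_\square)^{-1}$ is not generated by a single completed cycle: by the formula of Alexandrov--Lewa\'nski--Shadrin it is the exponential of an \emph{infinite} series of completed-cycle operators, so the resulting evolution equation $\partial_\hbar Z=JZ$ has infinitely many $Q_r$'s on the right-hand side. Two nontrivial issues must then be resolved, neither of which your proposal addresses: (a) one must prove that the corresponding infinite sum of products of derivatives of the $H_{g',n'}$ converges to a meromorphic function with poles only at the ramification points and on the diagonals --- the paper does this by isolating the infinite ``tail'' of factors $D_x\tilde H_{0,1}=z^q$, which sums to an explicit function convergent on the unit disc; and (b) one must run an induction on $2g-2+n$ in which the symmetrized cut-and-join identity is split into pieces holomorphic by the induction hypothesis (via the abstract corollaries of lower quadratic loop equations established in the $r$-spin paper) plus a remainder equal to $\left[\vert_{w_1=z_0}\right]\left[\vert_{w_2=z_0}\right]\Delta_{w_1}\Delta_{w_2}\mathcal{W}_{g,2,n}$ multiplied by the explicit series $\sum_{k\geq 1,\,\ell\geq 0}\tfrac{(2k+\ell-1)!}{2^{2k+\ell}\ell!(2k)!}\,k\,s^\ell\delta^{2k-2}=\tfrac{1}{2}\big((2-s)^2-\delta^2\big)^{-1}$, whose non-vanishing at the critical point (using $W_{0,1}(x(p))=\tfrac{1}{q+1}$) is what finally yields the quadratic loop equation for $(g,n)$. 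Without these two steps --- the convergence analysis and the inductive extraction of the $\mathcal{W}_{g,2,n}$ term with an invertible coefficient --- the proof is incomplete.
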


In addition to settling an explicitly posed open conjecture, this theorem is interesting in several different contexts. Firstly, it can be considered as a mirror symmetry statement in the context of the remodeling of the B-model principle of~\cite{BKMP}. Secondly, it is a part of a more general conjecture for weighted double Hurwitz numbers proposed in~\cite{ACEH} and its proof might be useful for the analysis of this more general conjecture. Thirdly, once the Do-Karev conjecture is proved, one can use the results of~\cite{Eynard,DOSS} to express the monotone orbifold Hurwitz numbers as the intersection numbers of the tautological classes on the moduli spaces of curves (for $q=1$, this is done in~\cite{ALS,DoKarev}).

\subsection{Proof} For the proof we use a corollary of~\cite[Theorem 2.2]{BorotShadrin} (see also~\cite{BorotEynardOrantin}). Namely, in order to prove that the differentials $d_1\otimes\cdots \otimes d_n H_{g,n}(x_1,\dots,x_n)$ satisfy the topological recursion on a given \emph{rational} spectral curve, it is sufficient to show that 
\begin{enumerate}
	\item The conjecture holds for $(g,n)=(0,1)$ and $(0,2)$.
	\item $H_{g,n}(x_1,\dots,x_n)$, $2g-2+n>0$, are the expansion at the point $x_1=\cdots=x_n=0$ of a finite linear combination of the products of finite order $d/dx_i$-derivatives of the functions $\xi_j(z_i)\coloneqq 1/(z_i-p_j)$, $x_i=x(z_i)$, $i=1,\dots,n$, $j=1,\dots,q$. 
	\item The differential forms $d_1\otimes\cdots \otimes d_n H_{g,n}(x_1,\dots,x_n)$, considered as globally defined differentials on the spectral curve rather than formal power series expansions, satisfy the so-called \emph{quadratic loop equations}. For a collection of symmetric differentials $ (\omega_{g,n})_{g\geq 0, n \geq 1}$ on a spectral curve, the quadratic loop equations state that for all $ g \geq 0$ and $n \geq 1$
	\begin{equation}
	\label{QLE}
	\omega_{g-1,n+1}(z, \sigma_i(z), z_{\llbracket n-1 \rrbracket} ) + \sum_{\substack{g = g_1 + g_2\\ \llbracket n-1 \rrbracket = I \sqcup J}} \omega_{g_1,|I|+1}(z, z_I) \omega_{g_2,|J|+1}(\sigma_i(z),z_J)
	\end{equation}
	is holomorphic in $z$ near $p_i$, with a double zero at $ p_i$ itself, cf. \cite[(2.2)]{BorotShadrin}.
\end{enumerate}
The relation between \cite[Theorem 2.2]{BorotShadrin} and the list above is given by lemma~\ref{lem:lle}.\par
As we mentioned above, the unstable cases are proved in~\cite{DoKarev,KLS}, and in an unpublished work of Karev. The second property is proved in~\cite{KLS}. So, the only thing that we have to do to complete the proof is to formulate and prove the quadratic loop equations. It is done in proposition~\ref{prop:QLE} below.\hfill $\Box$

\begin{remark}
	This approach to proving the topological recursion was used before in~\cite{DLPS,DBKraPS} (where the quadratic loop equations followed directly from the cut-and-join equation) and in~\cite{BKLPS,r-spinFullProof}, where a system of formal corollaries of the quadratic loop equations was related to the cut-and-join operators of completed $r$-cycles. In this paper we combine the latter result with the formula in~\cite[Example 5.8]{ALS} that expresses the partition function of the monotone orbifold Hurwitz numbers in terms of an infinite series of the operators of completed $r$-cycles.
\end{remark}

\subsection{Organization of the paper} This paper is very essentially based on the results of~\cite{r-spinFullProof} and~\cite{ALS}. However, in this paper, we work exclusively in the so-called bosonic Fock space, i.e. the space of symmetric functions instead of the fermionic Fock space, or semi-infinite wedge formalism, as in \emph{op.~cit.}. By the classical boson-fermion correspondence~\cite{KacBook,MiwaJimboDate}, we can translate the necessary results in the fermionic Fock space to the language of differential operators in the ring of symmetric functions.

In section~\ref{sec:CJ} we derive the so-called ``cut-and-join'' evolutionary equation for the exponential partition function of monotone orbifold Hurwitz numbers and discuss its convergence issues. In section~\ref{sec:Holom} we use the cut-and-join operator to construct a particular expression holomorphic at the critical points of the spectral curve, which is needed for the proof of the quadratic loop equations. In section~\ref{sec:QLE} we formulate and prove the quadratic loop equations.

\subsection{Acknowledgments} We thank A.~Alexandrov, P.~Dunin-Barkowski, M.~Karev, and D.~Lewa\'nski for useful discussions. We also thank an anonymous referee for useful suggestions.
A.P. would like to thank Korteweg-de Vries Institute for hospitality and flourishing scientific atmosphere.
R.K. and S.S. were supported partially by the Netherlands Organization for Scientific Research.
A.P. was supported in part by the grant of the Foundation for the Advancement of Theoretical Physics “BASIS",
 by RFBR grants 16-01-00291, 18-31-20046 mol\_a\_ved and 19-01-00680 A.

\section{The cut-and-join operator}
\label{sec:CJ}

Define the function $ \zeta (z) = e^{z/2} - e^{-z/2}$ and for a partition $ \lambda$ (viewed as its Young diagram), and a box $ \square = (i,j) \in \lambda$, let $ \mathsf{cr}^\lambda_\square = i-j$ be its content.  The partition function of the monotone $q$-orbifold Hurwitz numbers can be defined as \cite{HarnadOrlov}
\begin{equation}\label{partitionfunction}
Z\coloneqq \sum\limits_{g=0}^\infty\sum\limits_{\mu} \frac{\hbar^{2g-2+l(\mu)+|\mu|/q}}{|\mathrm{Aut}(\mu)|}h^\bullet_{g,\mu}\prod\limits_{i=1}^{l(\mu)}p_{\mu_i}= \sum_\lambda s_\lambda (\delta_q ) \Big( \prod_{\square \in \lambda} (1-\hbar \mathsf{cr}^\lambda_\square )^{-1}\Big) s_\lambda (p)\,,
\end{equation}
where the $s_\lambda$ are Schur functions expressed as polynomials in the power sums $ p_i$, and the left Schur function is evaluated at the point $ p_j = \delta_{j,q}$.

Define the series of operators $Q(z)=\sum_{r=1}^\infty Q_rz^r$ as 
\begin{equation}
Q(z)\coloneqq\frac{1}{\zeta(z)} \sum_{s=1}^\infty \left(
\sum_{\substack{
n\geq 1 \\
k_1,\dots,k_n\geq 1 \\
k_1+\cdots+k_n=s
}} 
\frac 1{n!}
\prod_{i=1}^n
\frac{\zeta(k_i z) p_{k_i}}{k_i}
\right)
\left(
\sum_{\substack{
		m\geq 1 \\
		\ell_1,\dots,\ell_m\geq 1 \\
		\ell_1+\cdots+\ell_m=s
}} 
\frac 1{m!}
\prod_{j=1}^m
\zeta(\ell_j z) \frac{\partial}{\partial p_{\ell_j}}
\right).
\end{equation}
Define the operator $J$ as 
\begin{align}
J &\coloneqq  \frac{\frac{\partial}{\partial \hbar}}{\zeta\left(\hbar^2 \frac{\partial}{\partial \hbar}\right)} \sum_{r=1}^\infty \hbar^r Q_r (r-1)! 
-
\frac{1}{\hbar} Q_1
\\ \notag
& = \sum_{r=2}^\infty \hbar^{r-2} Q_r (r-1)! +\sum_{\alpha=1}^\infty c_\alpha \sum_{r=1}^\infty \hbar^{r-2+2\alpha} Q_r (r-1+2\alpha)!.
\end{align}
Here $c_\alpha$ are the coefficients of the expansion $\frac{z}{\zeta(z)}= \sum_{\alpha=0}^\infty c_\alpha z^{2\alpha}$, that is, $c_1=-\frac{1}{24}, c_2=\frac 7{5760}$, and in general $c_\alpha$ can be expressed in terms of the Bernoulli numbers as $c_\alpha = \frac{2^{1-2\alpha} B_{2\alpha}}{(2\alpha)!}$.
\begin{proposition} 
\label{DifEqPartFun}
We have: 
$
\frac{\partial }{\partial \hbar} Z= JZ.
$
\end{proposition}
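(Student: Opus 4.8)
The plan is to diagonalise both sides of the claimed equation in the Schur basis. First I would rewrite the right-hand factor of \eqref{partitionfunction} as $Z=\sum_\lambda s_\lambda(\delta_q)\,W_\lambda(\hbar)\,s_\lambda(p)$, where $W_\lambda(\hbar)=\prod_{\square\in\lambda}(1-\hbar\,\mathsf{cr}^\lambda_\square)^{-1}$, and note that only this middle factor depends on $\hbar$. Since
\[
\frac{\partial}{\partial\hbar}\log W_\lambda(\hbar)=\sum_{\square\in\lambda}\frac{\mathsf{cr}^\lambda_\square}{1-\hbar\,\mathsf{cr}^\lambda_\square}=\sum_{r\ge1}\hbar^{r-1}C_r(\lambda),\qquad C_r(\lambda):=\sum_{\square\in\lambda}\bigl(\mathsf{cr}^\lambda_\square\bigr)^r,
\]
differentiating gives $\frac{\partial}{\partial\hbar}Z=\sum_\lambda s_\lambda(\delta_q)\,W_\lambda(\hbar)\bigl(\sum_{r\ge1}\hbar^{r-1}C_r(\lambda)\bigr)s_\lambda(p)$. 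Using the second expression for $J$, which involves no $\partial_\hbar$ and acts on the $p$-variables alone with scalar coefficients in $\hbar$, the identity $\frac{\partial}{\partial\hbar}Z=JZ$ is then equivalent to the single eigenvalue statement
\[
J\,s_\lambda(p)=\Bigl(\sum_{r\ge1}\hbar^{r-1}C_r(\lambda)\Bigr)s_\lambda(p)\qquad\text{for every partition }\lambda.
\]

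The engine of the argument is the diagonalisation of $Q(z)$. The key lemma I would prove is that $Q(z)$ acts diagonally on Schur functions, with
\[
Q(z)\,s_\lambda=\Bigl(\zeta(z)\sum_{\square\in\lambda}e^{z\,\mathsf{cr}^\lambda_\square}\Bigr)s_\lambda .
\]
The natural route is the boson–fermion correspondence: $Q(z)$ is the image in the ring of symmetric functions of the vacuum-normalised diagonal operator $\mathcal E_0(z)$ of the semi-infinite wedge, whose eigenvalue on the vector indexed by $\lambda$ is a shifted power sum of the type $\sum_{i\ge1}\bigl(e^{z(\lambda_i-i+1/2)}-e^{z(1/2-i)}\bigr)$; the standard identity relating shifted power sums to the content generating series then yields the displayed eigenvalue in the content convention of the statement. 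This is precisely the input provided by \cite{ALS} (cf. Example~5.8) and \cite{r-spinFullProof} once translated through the correspondence. Writing $q_r(\lambda)$ for the eigenvalue of $Q_r$, the lemma reads $\sum_{r\ge1}q_r(\lambda)z^r=\zeta(z)\sum_{\square}e^{z\,\mathsf{cr}^\lambda_\square}$; in particular $q_1(\lambda)=|\lambda|$ since $\zeta(z)=z+O(z^3)$.

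Granting the lemma, it remains to check that the combination defining $J$ converts the $q_r(\lambda)$ into the content power sums $C_r(\lambda)$. From the second expression for $J$, the coefficient of $\hbar^{\rho-1}$ in $J\,s_\lambda/s_\lambda$ is $\rho!\sum_{\alpha\ge0}c_\alpha\,q_{\rho+1-2\alpha}(\lambda)$, with $c_0=1$ and only terms $\rho+1-2\alpha\ge1$ retained. The reduction is therefore equivalent to $C_\rho(\lambda)/\rho!=\sum_{\alpha\ge0}c_\alpha\,q_{\rho+1-2\alpha}(\lambda)$ for all $\rho\ge1$. I would verify this by a generating-function computation: multiplying by $z^\rho$ and summing over $\rho\ge1$, the left-hand side becomes $\sum_{\square}e^{z\,\mathsf{cr}^\lambda_\square}-|\lambda|$, while the right-hand side, after inserting $\sum_{\alpha\ge0}c_\alpha z^{2\alpha}=z/\zeta(z)$, collapses to $\frac1{\zeta(z)}\sum_{r\ge1}q_r(\lambda)z^r-q_1(\lambda)$; by the lemma the first summand is $\sum_{\square}e^{z\,\mathsf{cr}^\lambda_\square}$ and $q_1(\lambda)=|\lambda|$, so the two sides match. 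Conceptually this is the operator $\frac{\partial/\partial\hbar}{\zeta(\hbar^2\,\partial/\partial\hbar)}$ implementing the change of variables from the $z$-grading of $Q(z)$ to the $\hbar$-grading of $W_\lambda$, with the factorials $(r-1)!$ turning the exponential content generating series into the ordinary one and the subtraction of $\frac1\hbar Q_1$ absorbing the constant term $q_1(\lambda)=|\lambda|$; this is precisely the passage to the cut-and-join operators of completed cycles alluded to in the remark above.

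I expect the principal obstacle to lie in two places. The substantive one is the eigenvalue lemma for $Q(z)$: identifying the explicit bosonic operator written in the statement with the bosonisation of $\mathcal E_0(z)$ is where the semi-infinite-wedge machinery of \cite{ALS,r-spinFullProof} is genuinely needed, and the bookkeeping of the normal-ordering constants has to be done carefully. The second, more technical, obstacle is that every manipulation above takes place between infinite sums of unbounded operators and the formal series $Z$, so the statement must be made precise on a suitable completion: one has to verify that $J$ is well defined there, that $Q(z)\,s_\lambda$ converges, and that the rearrangements of the $z$- and $\hbar$-expansions are legitimate. Settling these convergence issues in tandem with the algebraic identity is the part I would budget the most care for.
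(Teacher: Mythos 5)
Your argument is correct and is essentially the paper's own proof: both diagonalise everything in the Schur basis and rest on the same two inputs, namely the explicit eigenvalues of the diagonal operators $\mathcal{F}_r$ and the identification $\mathcal{F}_r s_\lambda = r!\,Q_r s_\lambda$ (which is exactly your eigenvalue lemma for $Q(z)$ after summing the shifted power sums into the content exponential generating series). The only difference in route is that the paper quotes the closed operator formula $\mathcal{D}(\hbar)=\exp\big(\big[\tilde{\mathcal{E}}_0(\hbar^2\tfrac{\partial}{\partial\hbar})/\zeta(\hbar^2\tfrac{\partial}{\partial\hbar})-\mathcal{F}_1\big]\log\hbar\big)$ of \cite{ALS} and differentiates it, whereas you re-derive precisely the differentiated scalar content of that formula, $C_\rho(\lambda)/\rho!=\sum_{\alpha\ge 0}c_\alpha\,q_{\rho+1-2\alpha}(\lambda)$, by a direct generating-function computation; this makes your version slightly more self-contained at the cost of redoing a known identity, and your worry about analytic completions is not an obstacle since every operator involved is diagonal on $s_\lambda$ and each coefficient of a fixed power of $\hbar$ and fixed $|\lambda|$ receives only finitely many contributions, so the whole identity lives in formal power series. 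The one point that genuinely needs care is the content convention: with $\mathsf{cr}^\lambda_{(i,j)}=i-j$ as defined in the paper, the eigenvalue of $Q(z)$ produced by the stated formula for $\mathcal{F}_r$ is $\zeta(z)\sum_{\square\in\lambda}e^{z(j-i)}$, i.e.\ the exponent carries the \emph{opposite} sign to $\mathsf{cr}$, while your final cancellation requires the exponent to match the content appearing in $\prod_{\square}(1-\hbar\,\mathsf{cr}^\lambda_\square)^{-1}$; so before asserting the ``displayed eigenvalue in the content convention of the statement'' you must align the sign conventions of the two cited sources (this is a bookkeeping issue in the conventions, not a gap in your logic).
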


\begin{proof} Recall~\cite[proposition 5.2]{ALS}, which states that the operator $\mathcal{D}(\hbar)$ acting on the space of symmetric functions as $\mc{D} (\hbar)s_\lambda \coloneqq \left[\prod_{\square \in \lambda} (1-\hbar \mathsf{cr}^\lambda_\square )^{-1}\right]s_\lambda$ as in equation~\eqref{partitionfunction} can be expressed by the formula
\begin{equation}
\mathcal{D}(\hbar)=\exp\left(\left[
\tilde{\mathcal{E}}_0(\hbar^2 \frac{\partial}{\partial \hbar}) / \zeta(\hbar^2 \frac{\partial}{\partial \hbar}) 
-\mathcal{F}_1
\right]\log(\hbar) \right) \,,
\end{equation}
where $\tilde{\mathcal{E}}_0(z) = \sum_{r=1}^\infty \mathcal{F}_r\frac{z^r}{r!}= z \sum_{r=1}^\infty \mathcal{F}_{r}\frac{z^{r-1}}{r!}$, and $\mathcal{F}_r$ is the operator whose action in the basis of Schur polynomials is diagonal and is given by
\begin{equation}
\mathcal{F}_{r} s_\lambda = \sum_{i=1}^\ell \left( (\lambda_i -i +\frac 12)^r - (-i+\frac 12)^r\right)s_\lambda
\end{equation}
for $\lambda = (\lambda_1\geq \lambda_2\geq \cdots \geq \lambda_\ell)$ \cite[equation (2.4)]{ALS}. The operators $\mathcal{F}_r$ can be expressed as differential operators in the variables $p$ as  $\mathcal{F}_r s_\lambda = r! Q_r s_\lambda $, $r\geq 1$ (\cite[theorem 5.2]{SSZ-LMS}, see also~\cite{Alexandrov-CJ,Rossi-CJ}).
Note that 
\begin{align}
\frac{\partial}{\partial \hbar}\mathcal{D}(\hbar)
&=\frac{1}{\hbar^2}\cdot \hbar^2\frac{\partial}{\partial \hbar}\mathcal{D}(\hbar) = \mathcal{D}(\hbar) \cdot \frac{1}{\hbar^2}\left(\left[
\tilde{\mathcal{E}}_0(\hbar^2 \frac{\partial}{\partial \hbar}) / \zeta(\hbar^2 \frac{\partial}{\partial \hbar}) - \mathcal{F}_1
\right]\hbar \right)\\ \notag
&=\mathcal{D}(\hbar) \cdot \left(\frac{1}{\hbar^2} \left(
\frac{\hbar^2 \frac{\partial}{\partial \hbar}}{\zeta(\hbar^2 \frac{\partial}{\partial \hbar})}
\sum_{r=1}^\infty \mathcal{F}_{r}\frac{\hbar^{r}}{r}
\right)
-\frac{1}{\hbar}\mathcal{F}_1 \right)
\end{align}
and, therefore, 
\begin{align}
\frac{\partial}{\partial \hbar}Z & = \sum_\lambda s_\lambda( \delta_q) \mathcal{D}(\hbar)  \left[
\frac{\frac{\partial}{\partial \hbar}}{\zeta(\hbar^2 \frac{\partial}{\partial \hbar})}
\sum_{r=1}^\infty \mathcal{F}_{r}\frac{\hbar^{r}}{r}
-\frac{1}{\hbar}\mathcal{F}_1
\right]
s_\lambda (p)
\\ \notag
& = \sum_\lambda s_\lambda( \delta_q) \mathcal{D}(\hbar)  \left[
\frac{\frac{\partial}{\partial \hbar}}{\zeta(\hbar^2 \frac{\partial}{\partial \hbar})}
\sum_{r=1}^\infty \hbar^r Q_{r} (r-1)!
-\frac{1}{\hbar}Q_1
\right]
s_\lambda (p)
=JZ.
\end{align}
\end{proof}

\begin{corollary}\label{cor:CJ}
 For $2g-2+n>0$ we have:
\begin{align} \label{eq:CutAndJoinNPoint}
& 
\bigg(2g-2+n+\frac{1}{q}\sum_{i=1}^n D_{x_i}\bigg) \tilde H_{g,n} =
\\ \notag
& 
\sum_{\substack{m \geq 1, d \geq 0 \\ m + 2d \geq 2}} 
\frac{(m+2d-1)!}{m!} 
\sum_{\ell =1}^m 
\frac{1}{\ell!}
\sum_{\substack{
\{ k\} \sqcup \bigsqcup_{j=1}^\ell K_j = \llbracket n \rrbracket \\ 
\bigsqcup_{j=1}^\ell M_j =  \llbracket m \rrbracket\\ 
M_j \neq \emptyset \\
g-d = \sum_{j=1}^\ell g_j + m - \ell \\
g_1,\ldots,g_{\ell} \geq 0 
}}  
Q_{d,\emptyset,m}^{(k)} \bigg[\prod_{j = 1}^{\ell} \tilde{H}_{g_j,|M_j|+|K_j|}(\xi_{M_j},x_{K_j})\bigg]
\\ \notag
&
+
\sum_{\alpha=1}^g c_\alpha
\sum_{\substack{m \geq 1, d \geq 0 \\ m + 2d \geq 1}} 
\frac{(m+2d-1+2\alpha)!}{m!} 
\sum_{\ell =1}^m 
\frac{1}{\ell!}
\sum_{\substack{
		\{ k\} \sqcup \bigsqcup_{j=1}^\ell K_j = \llbracket n \rrbracket \\ 
		\bigsqcup_{j=1}^\ell M_j =  \llbracket m \rrbracket\\ 
		M_j \neq \emptyset \\
		g-d-\alpha = \sum_{j=1}^\ell g_j + m - \ell \\
		g_1,\ldots,g_{\ell} \geq 0 
}}  
Q_{d,\emptyset,m}^{(k)} \bigg[\prod_{j = 1}^{\ell} \tilde{H}_{g_j,|M_j|+|K_j|}(\xi_{M_j},x_{K_j})\bigg],
\end{align}
where $D_{x_i} = x_i \frac{\partial}{\partial x_i}$,
\begin{align}
\label{Qrdef} 
 \sum_{d \geq 0} Q_{d;K_0 ,m}^{(k)}\,z^{2d} & = \frac{z}{\zeta(z)} \prod_{i \in \{k\} \sqcup K_0} \frac{\zeta(zD_{x_i})}{zD_{x_i}} \circ \prod_{j = 1}^m \frac{\zeta(zD_{\xi_j})}{z}\bigg|_{\xi_j = x_k}, & D_{\xi_j} &= \xi_j \frac{\partial}{\partial \xi_j}\,,
\\
\tilde H_{0,1}(\xi) & = H_{0,1}(\xi) 
\\
\tilde{H}_{0,2}(\xi,x) & = H_{0,2}(\xi,x) + H^{\textup{sing}}_{0,2}(\xi, x) \,, & H^{\textup{sing}}_{0,2}(\xi, x)& =\log \Big(\frac{\xi - x}{\xi x}\Big),\label{TildeH02}
\\
\tilde{H}_{0,2}(\xi_1,\xi_2) & = H_{0,2}(\xi_1,\xi_2),
\\
\tilde{H}_{g,n} & = H_{g,n} + \sum_{\alpha = 0}^g c_\alpha \frac{(2g-2+n+2\alpha )!}{2g-2+n}\,, & 2g-2+n&>0.\label{TildeHgn}
\end{align}
\end{corollary}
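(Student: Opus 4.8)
The plan is to descend from the partition-function identity $\frac{\partial}{\partial\hbar}Z = JZ$ of Proposition~\ref{DifEqPartFun} to its connected, $n$-point incarnation by passing to the free energy $F\coloneqq\log Z$ and extracting correlators. First I would rewrite the identity as $\frac{\partial}{\partial\hbar}F = e^{-F}(Je^F)$ and expand $J$ in the manifest form $J = \sum_{\alpha\geq0}\sum_{r\geq1}c_\alpha\,(r-1+2\alpha)!\,\hbar^{r-2+2\alpha}Q_r - \frac1\hbar Q_1$, in which the $-\frac1\hbar Q_1$ removes exactly the $(r,\alpha)=(1,0)$ term; the surviving $\alpha=0$, $r\geq2$ part and the $\alpha\geq1$ part then produce the two nested sums of the corollary, together with the factorials $\tfrac{(m+2d-1)!}{m!}$ and $\tfrac{(m+2d-1+2\alpha)!}{m!}$ once one sets $r=m+2d$.

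The core step is to translate the bosonic operator $Q_r$, written in the $p_k$ and $\partial/\partial p_k$, into the operator $Q^{(k)}_{d,\emptyset,m}$ of~\eqref{Qrdef} acting on the $x$-variables. Using the dictionary of~\cite{r-spinFullProof,ALS,SSZ-LMS}, I would send each annihilation factor $\zeta(\ell z)\partial/\partial p_\ell$ to $\zeta(zD_\xi)/z$, opening a new puncture $\xi_j$ whose winding is read off by $D_{\xi_j}$, and the creation factor $\zeta(k z)p_k/k$ to $\zeta(zD_{x_k})/(zD_{x_k})$, producing the single distinguished point $k$; the overall $1/\zeta(z)$ becomes $z/\zeta(z)$ and is the source of the $d$- and $c_\alpha$-corrections. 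Matching $z$-degrees forces $r=m+2d$, and the diagonal specialization $\xi_j=x_k$ realizes the gluing of the $m$ punctures to the distinguished point. Simultaneously, the conjugation $e^{-F}(\,\cdot\,)e^F$ is resolved by the cumulant (set-partition) formula: the $m$ punctures distribute over $\ell$ connected blocks $M_j$, each yielding one factor $\tilde H_{g_j,|M_j|+|K_j|}(\xi_{M_j},x_{K_j})$, which accounts for the sum over $\ell$, the symmetry factor $1/\ell!$, and the partitions $\{k\}\sqcup\bigsqcup_j K_j=\llbracket n\rrbracket$ and $\bigsqcup_j M_j=\llbracket m\rrbracket$. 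Reading off the $\hbar$-grading then gives the left-hand operator $2g-2+n+\frac1q\sum_i D_{x_i}$, namely the eigenvalue of $\hbar\frac{\partial}{\partial\hbar}$ on a monomial $\hbar^{2g-2+n+|\mu|/q}$ with $|\mu|/q$ converted to $\frac1q\sum_i D_{x_i}$, together with the genus bookkeeping $g-d=\sum_j g_j+m-\ell$ (respectively $g-d-\alpha=\sum_j g_j+m-\ell$).

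The main obstacle I anticipate is twofold. The first is the faithful execution of this dictionary: justifying the resummations that produce $\zeta(zD_{x_k})/(zD_{x_k})$ and $\zeta(zD_{\xi_j})/z$ and the collapse $\xi_j=x_k$, and verifying that only the distinguished point $k$ is created, so that $K_0=\emptyset$; here I would lean on the boson-fermion correspondence and the explicit formulas of~\cite{r-spinFullProof,ALS}. The second is the treatment of the unstable contributions and of the corrections in~\eqref{TildeH02}--\eqref{TildeHgn}: when a block $\tilde H_{g_j,|M_j|+|K_j|}$ is unstable, i.e.\ of type $(0,1)$ or $(0,2)$, one must check that the redefinitions $\tilde H_{0,2}=H_{0,2}+H^{\textup{sing}}_{0,2}$ and $\tilde H_{g,n}=H_{g,n}+\sum_\alpha c_\alpha\frac{(2g-2+n+2\alpha)!}{2g-2+n}$ absorb precisely the boundary terms together with the $-\frac1\hbar Q_1$ subtraction, so that the equation closes in the stated form. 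This last verification is essentially the combinatorial heart of the computation.
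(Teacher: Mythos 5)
Your overall route coincides with the paper's: both follow the template of the proof of \cite[Proposition~10]{BKLPS}, starting from $\frac{\partial}{\partial\hbar}Z=JZ$ of Proposition~\ref{DifEqPartFun}, passing to connected correlators via $F=\log Z$ (which produces the set-partition sum over blocks $M_j,K_j$ and the factor $1/\ell!$), translating the bosonic operators $Q_r$ into the operators $Q^{(k)}_{d,K_0,m}$ with $r=m+2d$, and reading off the operator $2g-2+n+\frac1q\sum_i D_{x_i}$ from the $\hbar$-grading. So the architecture is right.

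There is, however, a concrete gap exactly at the point you defer as ``the combinatorial heart'': the corollary's statement includes the explicit formula \eqref{TildeHgn} for the constant $\tilde H_{g,n}-H_{g,n}=\sum_{\alpha=0}^g c_\alpha\frac{(2g-2+n+2\alpha)!}{2g-2+n}$, and your proposal never determines it. The paper's proof spends essentially all of its explicit content on this point: (i) absorbing the factors $\frac{x_j}{x_k-x_j}$ into $\tilde H_{0,2}$ leaves over exactly one family of terms in which \emph{every} factor is a singular part $H^{\textup{sing}}_{0,2}$ (the case $m=\ell=n-1$, rewritten using the auxiliary operators $Q^{(k)}_{d,K_0,0}$ with $K_0\neq\emptyset$ --- this is also what resolves your worry about why only $K_0=\emptyset$ survives in the final statement); (ii) the identity $\sum_{k=1}^n\prod_{j\neq k}\frac{x_j}{x_k-x_j}=-1$ collapses that family to the constant $\sum_{\alpha=0}^g c_\alpha(n+2g+2\alpha-2)!$, on which no derivatives can act; (iii) an induction on $2g-2+n$, with hypothesis that $H_{g,n}-\tilde H_{g,n}$ is a constant and using that $D_{x_i}$ preserves degree and annihilates constants, then forces $H_{g,n}-\tilde H_{g,n}$ to be precisely the stated constant (the base cases being $(0,1)$ and $(0,2)$). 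Without (i)--(iii) you have established the shape of \eqref{eq:CutAndJoinNPoint} but not the corollary as stated, since the equation only ``closes'' once these corrections are pinned down. A minor additional slip: the $c_\alpha$-corrections in the second sum arise from the $1/\zeta(\hbar^2\frac{\partial}{\partial\hbar})$ in $J$, as you correctly say at first, not from the prefactor $1/\zeta(z)$ of $Q(z)$, which only accounts for the $z/\zeta(z)$ in \eqref{Qrdef}.
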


The contribution $H^{\textup{sing}}_{0,2}(\xi, x)$ is called the \emph{singular part}. Note that we introduce more general operators $Q_{d;K_0 ,m}^{(k)}$ than the ones used in the statement of the corollary (where only have $K_0=\emptyset$), since we need them below in the proof.

\begin{proof} The proof repeats \emph{mutatis mutandis} the proof of~\cite[proposition 10]{BKLPS}, so we only give a sketch of the idea, with the analogy explained. The operator $J$ is a linear combination of the $ Q_r$. Hence, comparing proposition~\ref{DifEqPartFun} to \cite[equation (3)]{BKLPS}: $ \frac{1}{r!} \frac{\partial}{\partial \beta} Z^{r,q} = Q_{r+1}Z^{r,q}$,  we can manipulate the first equation as the second. So, we map $ p_\mu$ to monomial symmetric functions $ \textup{M}_\mu(x_1, \dotsc, x_n)$, using \cite[equation~(5)]{BKLPS} for the effect of this map on the operators $Q_r$ acting on a partition function $Z$. The next step is incorporating the factors $ \frac{x_i}{x_k-x_i}$ as part of $ \tilde{H}_{0,2}$, which is given by \eqref{TildeH02} and explained in the proof of \cite[proposition~10]{BKLPS}. As in that proof, this adds a term on the right-hand side of equation~\eqref{eq:CutAndJoinNPoint} where all factors are singular parts, and this is the extra term in \eqref{TildeHgn}. This corresponds to the case $m=\ell = n-1$, and can equivalently be written in the shape of~\cite[proposition 6]{BKLPS}, with $m=\ell = 0$. This gives
\begin{equation}
\begin{split}
\sum_{d \geq \min \{ 0,3-n\}} (n+2d-2)! \sum_{\{k\} \sqcup K_0 = \llbracket n\rrbracket} \delta_{g,d} Q^{(k)}_{d,K_0,0} \prod_{j \in K_0} \frac{x_j}{x_k-x_j} \\
+\sum_{\alpha =1}^g c_\alpha (n+2d+2\alpha -2)! \sum_{\{k\} \sqcup K_0 = \llbracket n\rrbracket} \delta_{g,d} Q^{(k)}_{d,K_0,0} \prod_{j \in K_0} \frac{x_j}{x_k-x_j} \,.
\end{split}
\end{equation}
The condition $ d \geq \min \{ 0,3-n\}$ excludes the unstable cases $ (g,n) = (0,1), (0,2)$, and for $2g-2+n >0$  it simplifies to
\begin{align}
\sum_{\alpha =0}^g &c_\alpha(n+2g+2\alpha -2)! \sum_{\{k\} \sqcup K_0 = \llbracket n\rrbracket} \prod_{i =1}^n \frac{\zeta(zD_{x_i})}{zD_{x_i}} \prod_{j \in K_0} \frac{x_j}{x_k-x_j} \\
&= \sum_{\alpha =0}^g c_\alpha (n+2g+2\alpha -2)! \prod_{i =1}^n \frac{\zeta(zD_{x_i})}{zD_{x_i}}  \sum_{k=1}^n \prod_{\substack{j=1\\j\neq k}}^n \frac{x_j}{x_k-x_j} \,.
\end{align}
As calculated in~\cite[proposition 10]{BKLPS}, 
\begin{equation}
\sum_{k=1}^n \prod_{\substack{j=1\\j\neq k}}^n \frac{x_j}{x_k-x_j} =-1
\end{equation}
and therefore, there cannot be any derivatives acting on it.\par
Now we use induction on $ 2g-2+n$, with the induction hypothesis being that $ H_{g,n} - \tilde{H}_{g,n} $ is a constant. This holds for the $(0,1)$ case, while the $(0,2)$ case is taken care of by the previous argument. Using the induction hypothesis, we get from the previous calculation that
\begin{equation}
\bigg(2g-2+n+\frac{1}{q}\sum_{i=1}^n D_{x_i}\bigg)(H_{g,n}- \tilde H_{g,n}) = \sum_{\alpha =0}^g c_\alpha (n+2g+2\alpha -2)!\,,
\end{equation}
as all constants from previous $H-\tilde{H}$ are annihilated on the right-hand side by derivatives.\par
As both $H_{g,n} $ and $ \tilde{H}_{g,n}$ are power series in the $x_i$ and the $D_{x_i}$ preserve degree and vanish on constants, this shows that
\begin{equation}
H_{g,n}- \tilde H_{g,n} = \sum_{\alpha =0}^g c_\alpha \frac{(n+2g+2\alpha -2)!}{2g-2+n}\,.
\end{equation}
\end{proof}

\begin{remark}\label{rem:globalfunction} It is proved in~\cite{KLS} that each $H_{g,n}$ is an expansion of a globally defined meromorphic function on $\mathbb{C}^n$ with known positions of poles and bounds on their order. More precisely, for $2g-2+n>0$, $H_{g,n}(x_ {\llbracket n \rrbracket})$ is the expansion of a function of  $z_{\llbracket n \rrbracket}$, $x_i=x(z_i)$, which, by a slight abuse of notation, we also denote by $H_{g.n}(z_ {\llbracket n \rrbracket})$, with the poles in each variable only at the points $p_1,\dots,p_q$, where the order of poles is bounded by some constants that depend only on $g$ and $n$.
\end{remark}

Remark~\ref{rem:globalfunction} implies, in particular, that the right hand side of equation~\eqref{eq:CutAndJoinNPoint} is an infinite sum of meromorphic functions on $\mathbb{C}^{n}$ with the natural coordinates $z_1,\dots,z_n$, $x_i=x(z_i)$, with the poles in each variable only at the points $p_1,\dots,p_q$ and on the diagonals, where the order of poles is bounded by some constants that depend only on $g$ and $n$. Let us prove that this infinite sum converges absolutely and uniformly on every compact subset of $(D\setminus \{p_1,\dots,p_q\})^{n}\setminus\mathrm{Diag}$ to a meromorphic function with the same restriction on poles (and, therefore, equation~\eqref{eq:CutAndJoinNPoint} makes sense). Here, $D$ is the unit disc.

\begin{lemma}\label{lem:convergence} 
Corollary~\ref{cor:CJ} holds on the level of meromorphic functions on the unit disc $D$ in the variables $z_i$, $i=1,\dots,n$: the right hand side converges absolutely and uniformly on every compact subset of $(D\setminus \{p_1,\dots,p_q\})^{n}\setminus\mathrm{Diag}$ to a meromorphic function with the poles in each variable only at the points $p_1,\dots,p_q$ and on the big diagonal, the locus where at least two coordinates are equal. The order of poles is bounded by some constants that depend only on $g$ and $n$.
\end{lemma}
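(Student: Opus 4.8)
\section*{Proof proposal}

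The plan is to show that the only mechanism producing unboundedly many terms on the right-hand side of~\eqref{eq:CutAndJoinNPoint} is the proliferation of factors $\tilde H_{0,1}$, and that each such factor contributes a quantity bounded in modulus by $\rho^q<1$ on a compact set, so that the whole sum is dominated by a convergent geometric-type series whose sum inherits the desired pole structure.

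First I would analyze the index set combinatorially. Since $g_j\geq 0$ and the blocks $M_j$ are non-empty, the genus constraint $g-d=\sum_j g_j+m-\ell$ (and its analogue $g-d-\alpha=\sum_j g_j+m-\ell$ in the second sum) forces $0\leq d\leq g$ and $1\leq\alpha\leq g$, so that $d$ and $\alpha$ range over finite sets. Rewriting the same constraint as $m-\ell=\sum_j(|M_j|-1)=g-d-\sum_j g_j$ shows that the number of factors with $|M_j|\geq 2$ is at most $g$, the number with $g_j\geq 1$ is at most $g$, and the number with $|K_j|\geq 1$ is at most $n-1$ because $\sum_j|K_j|=n-1$. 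Hence every admissible configuration decomposes into a \emph{bounded core} of at most $O(g+n)$ non-trivial factors, which carry all the poles at the $p_i$ and on the diagonal by Remark~\ref{rem:globalfunction} and~\eqref{TildeH02}, together with $\ell-O(g+n)$ copies of the \emph{trivial factor} $\tilde H_{0,1}(\xi_{M_j})$ having $M_j$ a singleton, $K_j=\emptyset$, and $g_j=0$. As $m-\ell\leq g$, the number of trivial factors is at least $m-O(g+n)$.

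Next I would bound each trivial factor. By~\eqref{Qrdef} the operator $Q_{d,\emptyset,m}^{(k)}$ is the coefficient of $z^{2d}$ in $\tfrac{z}{\zeta(z)}\,\tfrac{\zeta(zD_{x_k})}{zD_{x_k}}\prod_{j=1}^{m}\tfrac{\zeta(zD_{\xi_j})}{z}$; since $\zeta(w)/w=1+O(w^2)$, each factor $\tfrac{\zeta(zD_{\xi_j})}{z}$ contributes $D_{\xi_j}$ at order $z^0$, while the total excess of $z$-power is exactly $2d\leq 2g$. Thus on all but at most $d\leq g$ of the trivial factors the operator acts simply as $D_{\xi_j}$, and for the spectral curve at hand $D_{\xi}\tilde H_{0,1}(\xi)\big|_{\xi=x_k}=D_{x_k}H_{0,1}=x_k y(x_k)=z_k^{q}$. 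On any compact $K\subset(D\setminus\{p_1,\dots,p_q\})^{n}\setminus\mathrm{Diag}$ one has $|z_i|\leq\rho<1$, hence $|z_k^{q}|\leq\rho^{q}<1$. The at most $g$ trivial factors on which a higher derivative $D_\xi^{2a+1}/(2^{2a}(2a+1)!)$ acts, together with the bounded core, are meromorphic with poles only at the $p_i$ and on the diagonal, hence bounded on $K$ by a constant depending only on $K$, $g$, and $n$.

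Finally I would assemble the estimate. For fixed $m$ the prefactors $\tfrac{(m+2d-1)!}{m!}$ and $\tfrac{(m+2d-1+2\alpha)!}{m!}$ are, since $d,\alpha\leq g$, polynomials in $m$ of degree at most $2g-1$; and the number of admissible configurations with this value of $m$ is itself polynomial in $m$, because all but $O(g+n)$ blocks are forced to be singletons, so a configuration is fixed by choosing which $O(g+n)$ labels enter the core (at most $m^{O(g+n)}$ ways), after the symmetry factor $1/\ell!$ accounts for the ordering of the $\ell$ blocks. Combining with the bound $(\rho^{q})^{\,m-O(g+n)}$ for the product of trivial factors, the total contribution of all terms with a given $m$ is at most $P(m)\,(\rho^{q})^{m}$ on $K$ for some polynomial $P$, and $\sum_m P(m)(\rho^q)^m<\infty$; this yields absolute and uniform convergence on $K$. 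I expect the main obstacle to be exactly this bookkeeping: verifying that the combinatorial multiplicity together with the factorials grows only polynomially in $m$, so that it is beaten by the geometric smallness $\rho^q<1$ arising from $D_xH_{0,1}=z^q$, which forces tracing the cancellation of $1/\ell!$ against the number of ordered set partitions. To identify the limit as meromorphic with poles of bounded order, I would multiply by $\prod_{i,j}(z_i-p_j)^{N}\prod_{i<i'}(z_i-z_{i'})^{N'}$ with $N,N'$ the uniform bounds on the pole orders of the core; each partial sum times this factor is holomorphic near the deleted loci and obeys the same geometric bound, so uniform convergence extends across $\{p_1,\dots,p_q\}$ and the diagonal, proving that the limit is meromorphic with poles only there and of order at most $N$, respectively $N'$.
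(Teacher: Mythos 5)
Your argument is correct and follows essentially the same route as the paper: both isolate the unbounded part of the sum as the tail of trivial factors $D_{x}\tilde H_{0,1}=xy=z^{q}$, note $|z^{q}|<1$ on the unit disc, and observe that the factorial prefactors and combinatorial multiplicities grow only polynomially in $m$ against this geometric decay. The only difference is presentational: the paper resums the tail in closed form as $\sum_{t}\tfrac{(m+t+2d-1)!}{t!}u^{t}=\bigl(\tfrac{d}{du}\bigr)^{m+2d-1}\tfrac{u^{m+2d-1}}{1-u}$ with $u=z^{q}$, which makes convergence and meromorphy immediate, whereas you obtain the same conclusion by a domination estimate $P(m)(\rho^{q})^{m}$ plus a pole-clearing argument.
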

\begin{proof}
In order to see the convergence, we have to rewrite each of the summands on the right hand side (the first summand and the coefficients of $c_\alpha$) in a way that collects all but finitely many terms in a series that can be analysed well. We claim that the only source of infinite summation are factors $D_\xi \tilde H_{0,1}(\xi)$. To see this, let us first analyse the summation range of \eqref{eq:CutAndJoinNPoint} for a given $(g,n)$. Let us work it out for the first summand, the computation for all other summands is exactly the same. One summation condition is $g-d = \sum_{j=1}^\ell g_j + m - \ell$, which can be rewritten as $g = d+  \sum_{j=1}^\ell (g_j + |M_j| - 1)$. As $ g_j + |M_j| -1 >0$ unless $ (g,|M_j| ) = (0,1)$, and furthermore there are only finitely many $x_i$ to distribute, this does show that the sum over $m$, $d$ (which bounds the number of $ D$), decompositions of $ \llbracket n \rrbracket $, and $ g_j$ is finite if we exclude $ D_{x_k} \tilde{H}_{0,1}$. Furthermore, each such term obtains an infinite `tail' of $D_{x_k} \tilde{H}_{0,1}$, as follows, where the variable $ m$ on the first line is split into $ m$ and $ t$ on the second and third line: 
\begin{align}
& 
\sum_{\substack{m \geq 1, d \geq 0 \\ m + 2d \geq 2}} 
\frac{(m+2d-1)!}{m!} 
\sum_{\ell =1}^m 
\frac{1}{\ell!}
\sum_{\substack{
		\{ k\} \sqcup \bigsqcup_{j=1}^\ell K_j = \llbracket n \rrbracket \\ 
		\bigsqcup_{j=1}^\ell M_j =  \llbracket m \rrbracket\\ 
		M_j \neq \emptyset \\
		g-d = \sum_{j=1}^\ell g_j + m - \ell \\
		g_1,\ldots,g_{\ell} \geq 0 
}}  
Q_{d,\emptyset,m}^{(k)} \bigg[\prod_{j = 1}^{\ell} \tilde{H}_{g_j,|M_j|+|K_j|}(\xi_{M_j},x_{K_j})\bigg] =
\\ \notag
& \sum_{m, d \geq 0} 
\frac{1}{m!} 
\sum_{\ell =0}^m 
\frac{1}{\ell!}
\sum_{\substack{
		\{ k\} \sqcup \bigsqcup_{j=1}^\ell K_j = \llbracket n \rrbracket \\ 
		\bigsqcup_{j=1}^\ell M_j =  \llbracket m \rrbracket\\ 
		M_j \neq \emptyset \\
		g-d = \sum_{j=1}^\ell g_j + m - \ell \\
		g_1,\ldots,g_{\ell} \geq 0 
}}  
\left[Q_{d,\emptyset,m}^{(k)} \bigg[\prod_{j = 1}^{\ell} \tilde{H}_{g_j,|M_j|+|K_j|}(\xi_{M_j},x_{K_j})\bigg] \right]_{\text{no}\, D_{x_k} \tilde H_{0,1}(x_k)}
\\ \notag
&
\phantom{
\sum_{\substack{m \geq 1, d \geq 0 \\ m + 2d \geq 2}} 
	\frac{1}{m!} 
	\sum_{\ell =1}^m 
	\frac{1}{\ell!}
	\sum_{
			g-d = \sum_{j=1}^\ell g_j + m - \ell }  
}
\times
\sum_{\substack{t=0\\t+m \geq 1\\t+m+2d \geq 2}}^\infty \frac{(m+t+2d-1)!}{t!} \left(D_{x_k} \tilde H_{0,1}(x_k)\right)^t.
\end{align}
Here we mean that in the second line, we exclude any factors $D_{x_k} \tilde{H}_{0,1}(x_k)$ left after the action of $Q$, and we collect these in the third line.
Now the first two summations are finite, the coefficients 
\begin{equation}
\left[Q_{d,\emptyset,m}^{(k)} \bigg[\prod_{j = 1}^{\ell} \tilde{H}_{g_j,|M_j|+|K_j|}(\xi_{M_j},x_{K_j})\bigg] \right]_{\text{no}\, D_\xi \tilde H_{0,1}(\xi)}
\end{equation}
are meromorphic functions with the desired restriction on poles, and the sum over $t$ determines the explicit functions $ \sum_{t=0}^\infty \frac{(m+t+2d-1)!}{t!} u^t = (\frac{d}{du})^{m+2d-1} \frac{u^{m+2d-1}}{1-u}$ of its argument $u = z^q =xy = D_{x_k} \tilde H_{0,1}(x_k)$, which converge on the unit disc.
\end{proof}

\section{Holomorphic expression}
\label{sec:Holom}

In this section we analyse a symmetrization of equation~\eqref{eq:CutAndJoinNPoint} near one of the critical points of the function $x(z)$. For the rest of this paper we fix $p=p_j$, $j=1,\dots, q$, and by $z\mapsto \bar{z}$ we denote the deck transformation near $p$. 

We define the \emph{symmetrizing operator} $\cS_z$ and the \emph{anti-symmetrizing operator} $\Delta_z$ by
\begin{align}
	\cS_zf(z) &\coloneqq f(z) + f(\bar{z})\,;\\ \notag
	\Delta_zf(z) &\coloneqq f(z) - f(\bar{z})\,,
\end{align}
and use the identity~\cite{BKLPS,r-spinFullProof}
\begin{equation}\label{Sondiagonal}
\cS_z \Big(f(z_1,\dots,z_r) \Big|_{z_i = z} \Big) = 2^{1-r} \Big( \sum_{\substack{I \sqcup J = \llbracket r \rrbracket \\ |J|\,\,\text{even}}} \Big( \prod_{i \in I} \cS_{z_i} \Big) \Big( \prod_{j \in J} \Delta_{z_j} \Big)f(z_1,\dotsc, z_r) \Big)\Big|_{z_i = z}\,, \qquad r\geq 1.
\end{equation}

Recall remark~\ref{rem:globalfunction}. Another direct corollary of the results of~\cite{KLS} is the linear loop equations for the $n$-point functions that can be formulated as the following lemma:

\begin{lemma}
\label{lem:lle} 
For any $g\geq0$ and $n\geq 1$ we have: $\cS_{z_i} H_{g,n}(z_{\llbracket n \rrbracket})$ is holomorphic at $z_i\to p$. 
\end{lemma}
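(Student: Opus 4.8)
The plan is to deduce the statement from the explicit shape of the functions $H_{g,n}$ established in \cite{KLS} and recalled in item (2) of the proof sketch in the introduction. For stable $(g,n)$, i.e.\ $2g-2+n>0$, that result presents $H_{g,n}$ as a finite linear combination of products, over $i\in\llbracket n\rrbracket$, of single-variable building blocks $\frac{d^{a}}{dx_i^{a}}\xi_{j}(z_i)$ with $a\geq0$ and $j\in\{1,\dots,q\}$, with exactly one such factor per variable. Fixing the index $i$ and treating all other variables as (generic) constants, $H_{g,n}$ becomes, as a function of $z_i$, a finite $\mathbb{C}$-linear combination of building blocks $\frac{d^{a}}{dx_i^{a}}\xi_{j}(z_i)$ whose coefficients do not depend on $z_i$. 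Since $\cS_{z_i}$ is linear and acts only on $z_i$, it then suffices to prove that $\cS_{z}\big(\frac{d^{a}}{dx^{a}}\xi_j(z)\big)$ is holomorphic at $z\to p$ for every $a\geq0$ and every $j$.

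The first step I would take is to observe that $\cS_z$ commutes with the operator $\frac{d}{dx}=\frac{1}{x'(z)}\frac{d}{dz}$. Since $x(\bar z)=x(z)$, differentiating gives $x'(\bar z)\,\frac{d\bar z}{dz}=x'(z)$, and a one-line computation then yields $\frac{d}{dx}\big(f(\bar z)\big)=\big(\frac{df}{dx}\big)(\bar z)$, whence $\frac{d}{dx}\,\cS_z f=\cS_z\,\frac{df}{dx}$ as an identity of meromorphic functions. Consequently
\[
\cS_z\Big(\tfrac{d^{a}}{dx^{a}}\xi_j(z)\Big)=\tfrac{d^{a}}{dx^{a}}\big(\cS_z\,\xi_j(z)\big),
\]
so the problem reduces to controlling $\cS_z\,\xi_j$ together with the effect of $\frac{d^a}{dx^a}$ on it.

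Next I would check that $\cS_z\,\xi_j$ is holomorphic at $p$. If $j$ is not the index of $p=p_j$ this is immediate, as both $\xi_j(z)$ and $\xi_j(\bar z)$ are holomorphic near $p$. If $p=p_j$, then writing $w=z-p$ and using that $p$ is a simple critical point of $x$ one has $\bar z-p=-w+O(w^2)$, so the simple poles of $\xi_j(z)=\tfrac1w$ and $\xi_j(\bar z)=\tfrac{1}{\bar z-p}$ cancel and $\cS_z\,\xi_j$ extends holomorphically across $p$. Now $\cS_z\,\xi_j$ is by construction invariant under $z\mapsto\bar z$, and a germ that is simultaneously holomorphic and invariant at a simple ramification point is the pullback of a germ holomorphic in $x$ near $x(p)$ (it is an even power series in a local square-root coordinate, hence a power series in $x-x(p)$). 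Applying $\frac{d^{a}}{dx^{a}}$ keeps it holomorphic in $x$, hence holomorphic at $p$. This settles all building blocks and therefore the stable $H_{g,n}$.

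The unstable cases $(0,1)$ and $(0,2)$ I would dispose of directly from the spectral-curve data: for $(0,1)$ one has $dH_{0,1}=y\,dx$, which is holomorphic (indeed vanishing) at each $p_j$ since $dx$ vanishes there while $y$ stays finite, so $H_{0,1}$ and a fortiori $\cS_zH_{0,1}$ is holomorphic at $p$; for $(0,2)$, with the second variable fixed at a generic point, the only singularity of $H_{0,2}$ in the first variable lies on the diagonal, away from $p$, so $H_{0,2}$ is already holomorphic there. I expect the one genuinely delicate point to be invoking \cite{KLS} in precisely the right form, namely that each variable enters $H_{g,n}$ through a \emph{single} building block rather than a product of several: a product of two building blocks in the same variable would produce an even-order pole that $\cS_{z_i}$ cannot remove, so it is exactly the per-variable linearity that makes the loop equation hold. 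Granting that structural input, the remaining steps are the short local computations above.
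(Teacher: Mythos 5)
Your proof takes essentially the same route as the paper's: both rest on the structural result of \cite{KLS} that each variable enters $H_{g,n}$ through a single factor of the form $\frac{d^{a}}{dx^{a}}\xi_j$, on the cancellation of the odd (simple) pole of $\xi_j$ under $\cS_z$, and on the invariance of $x$ under the deck transformation to carry holomorphicity through the $\frac{d}{dx}$-derivatives. Your explicit verification that $\cS_z$ commutes with $\frac{d}{dx}$, that an invariant holomorphic germ at a simple ramification point is a pullback of a germ in $x$, and your separate treatment of the unstable cases $(0,1)$ and $(0,2)$ merely spell out details the paper's terser argument leaves implicit.
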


\begin{proof}
By \cite[theorem~5.2~\&~proposition~6.2]{KLS}, cf. the statement after the proof of that proposition, the $H_{g,n}$ are linear combinations of polynomials in $ \big\{ \frac{d}{d x_i} \big\} $ acting on $ \prod_{i=1}^n \xi_{\alpha_i} (x_i)$, where, up to a linear change of basis given in \cite[Section~6.1]{KLS}, $ \xi_\alpha (z) = \frac{1}{z - p_\alpha}$. In particular, $ \cS_{z_i} \xi_\alpha (z_i) $ is holomorphic at $ z_i \to p$ (trivially if $ p \neq p_\alpha$ and because the pole is odd if $ p = p_\alpha$). Because $x$ itself is invariant under the involution by definition, this holomorphicity is preserved under any amount of applications of $ \frac{d}{d x_i}$.
\end{proof}

In order to simplify the notation, consider equation~\eqref{eq:CutAndJoinNPoint} for $H_{g,n+1}=H_{g,n+1}(x_0,\dots,x_n)$, and substitute $ x_i = x(z_i)$. We apply the operator $\cS_{z_0}$ to both sides of this equation. From the linear loop equations we immediately see that the left hand side of this equation is holomorphic at $z_0\to p$, as well as all summands on the right hand of this equation with $k\neq 0$ (it is an infinite sum that converges in the sense of lemma~\ref{lem:convergence}). Thus we know that  
\begin{lemma} \label{lem:holomorphicity-1} The expression
\begin{align} \label{eq:holomorphicexpression}
& \cS_{z_0} \Bigg[\sum_{\substack{m \geq 1, d \geq 0 \\ m + 2d \geq 2}} 
\frac{(m+2d-1)!}{m!} 
\sum_{\ell =1}^m 
\frac{1}{\ell!}
\!\!\!\!\!
\sum_{\substack{
		\bigsqcup_{j=1}^\ell K_j = \llbracket n \rrbracket \\ 
		\bigsqcup_{j=1}^\ell M_j =  \llbracket m \rrbracket\\ 
		M_j \neq \emptyset \\
		g-d = \sum_{j=1}^\ell g_j + m - \ell \\
		g_1,\ldots,g_{\ell} \geq 0 
}}  \!\!\!\!\!
Q_{d,\emptyset,m}^{(0)} \bigg[\prod_{j = 1}^{\ell} \tilde{H}_{g_j,|M_j|+|K_j|}(\xi_{M_j},x_{K_j})\bigg]
\\ \notag
&
 +
\sum_{\alpha=1}^g c_\alpha
\sum_{\substack{m \geq 1, d \geq 0 \\ m + 2d \geq 1}} 
\frac{(m+2d-1+2\alpha)!}{m!} 
\sum_{\ell =1}^m 
\frac{1}{\ell!}
\!\!\!\!\!\!\!\sum_{\substack{
		\bigsqcup_{j=1}^\ell K_j = \llbracket n \rrbracket \\ 
		\bigsqcup_{j=1}^\ell M_j =  \llbracket m \rrbracket\\ 
		M_j \neq \emptyset \\
		g-d-\alpha = \sum_{j=1}^\ell g_j + m - \ell \\
		g_1,\ldots,g_{\ell} \geq 0 
}}  \!\!\!\!\!\!\!
Q_{d,\emptyset,m}^{(0)} \bigg[\prod_{j = 1}^{\ell} \tilde{H}_{g_j,|M_j|+|K_j|}(\xi_{M_j},x_{K_j})\bigg]
\Bigg]
\end{align}
is holomorphic at $z_0\to p$. 
\end{lemma}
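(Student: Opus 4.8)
The plan is to apply $\cS_{z_0}$ to the whole of equation~\eqref{eq:CutAndJoinNPoint} written for $H_{g,n+1}(x_0,\dots,x_n)$ (so the index set of variables is $\{0,1,\dots,n\}$ and the distinguished index $k$ ranges over it), to observe that the expression~\eqref{eq:holomorphicexpression} is exactly $\cS_{z_0}$ applied to the subsum of the right-hand side collecting the terms with $k=0$, and then to isolate it by subtracting the two pieces whose holomorphicity at $z_0\to p$ can be established independently: the symmetrized left-hand side, and the symmetrized subsum over $k\neq 0$. The preliminary fact I would record is that $D_{x_0}=x_0\,\partial_{x_0}$ commutes with $\cS_{z_0}$. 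Indeed, since $x=x(z)$ is by construction invariant under the local deck transformation $z\mapsto\bar z$ at $p$, differentiating $x(\bar z)=x(z)$ shows that pulling back $D_{x_0}$ along the involution leaves it unchanged, so $\cS_{z_0}=\mathrm{id}+(\text{involution})^\ast$ commutes with $D_{x_0}$ (and trivially with every $D_{x_i}$, $i\neq 0$).

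For the left-hand side, the differential operator standing in front of $\tilde H_{g,n+1}$ is $2g-2+(n+1)+\tfrac1q\sum_{i=0}^n D_{x_i}$, which by the previous remark commutes with $\cS_{z_0}$; hence it suffices that $\cS_{z_0}\tilde H_{g,n+1}$ be holomorphic at $z_0\to p$. Since~\eqref{eq:CutAndJoinNPoint} is used only for $2g-2+(n+1)>0$, formula~\eqref{TildeHgn} gives $\tilde H_{g,n+1}=H_{g,n+1}+\mathrm{const}$; as $\cS_{z_0}$ of a constant is a constant and $\cS_{z_0}H_{g,n+1}$ is holomorphic at $z_0\to p$ by the linear loop equations (lemma~\ref{lem:lle}), applying the holomorphicity-preserving differential operator shows $\cS_{z_0}(\mathrm{LHS})$ is holomorphic.

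For a term of the right-hand side with $k\neq 0$, the distinguished variable $z_0$ enters only through the single factor $\tilde H_{g_j,|M_j|+|K_j|}(\xi_{M_j},x_{K_j})$ for which $0\in K_j$; the remaining factors, and the operator $Q^{(k)}_{d,\emptyset,m}$ itself (built from $D_{x_k}$, $D_{\xi_j}$ and the substitution $\xi_j\mapsto x_k$, none of which touches $z_0$ when $k\neq 0$), are independent of $z_0$. Thus $\cS_{z_0}$ passes through $Q^{(k)}$ and acts only on that one factor, and it remains to check that $\cS_{z_0}\tilde H_{g_j,|M_j|+|K_j|}$ is holomorphic at $z_0\to p$ in each possible type. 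The stable factors are covered by lemma~\ref{lem:lle} (up to the additive constant of~\eqref{TildeHgn}); the constraint $M_j\neq\emptyset$ forces any $(0,1)$-factor to be $\tilde H_{0,1}(\xi_m)$, which contains no $x$-argument and hence no $z_0$; and a $(0,2)$-factor carrying $z_0$ must be $\tilde H_{0,2}(\xi_m,x_0)$, whose singular part $H^{\textup{sing}}_{0,2}(\xi_m,x_0)$ from~\eqref{TildeH02} is itself holomorphic in $z_0$ near $p$, because $x(p)\neq 0$ while $x'(p)=0$ makes $\xi_m-x_0$, $\xi_m$ and $x_0$ all non-vanishing there for generic companion variables. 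In every case $Q^{(k)}$, not differentiating $z_0$, preserves holomorphicity in $z_0$.

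Finally, the subsum over $k\neq 0$ is an infinite series, so I would invoke lemma~\ref{lem:convergence}: it converges absolutely and uniformly on compact subsets of a punctured neighbourhood of $p$ (away from the diagonals and the other critical points), whence the sum of these holomorphic terms is again holomorphic at $z_0\to p$. Combining the three observations, and using that~\eqref{eq:CutAndJoinNPoint} is a genuine identity of meromorphic functions in the sense of lemma~\ref{lem:convergence}, the expression~\eqref{eq:holomorphicexpression} equals $\cS_{z_0}(\mathrm{LHS})$ minus $\cS_{z_0}$ of the subsum over $k\neq 0$, and is therefore holomorphic at $z_0\to p$. I expect the main obstacle to be bookkeeping rather than conceptual: legitimizing the interchange of $\cS_{z_0}$ with both the infinite summation and the operators $Q^{(k)}$, for which the uniform convergence of lemma~\ref{lem:convergence} is precisely what is needed, together with the careful enumeration of which factor can carry $z_0$ and the separate treatment of the singular part of $\tilde H_{0,2}$.
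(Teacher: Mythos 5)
Your proposal is correct and follows essentially the same route as the paper: apply $\cS_{z_0}$ to the cut-and-join equation~\eqref{eq:CutAndJoinNPoint} for $H_{g,n+1}(x_0,\dots,x_n)$, use the linear loop equations (lemma~\ref{lem:lle}) for the left-hand side and for the $k\neq 0$ summands, and invoke the convergence of lemma~\ref{lem:convergence} to conclude that the remaining $k=0$ subsum, which is exactly~\eqref{eq:holomorphicexpression}, is holomorphic at $z_0\to p$. The extra bookkeeping you supply (commutation of $D_{x_0}$ with $\cS_{z_0}$, the case analysis of which factor carries $z_0$, and the holomorphicity of the singular part of $\tilde H_{0,2}$ near $p$) is consistent with, and merely makes explicit, what the paper leaves implicit.
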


It is convenient to introduce the notation 
\begin{align}
& W_{g,n+m} (\xi_{\llbracket m \rrbracket} , x_{\llbracket n \rrbracket})\coloneqq D_{\xi_1}\cdots D_{\xi_m}D_{x_1}\cdots D_{x_n} \tilde{H}_{g,m+n}(\xi_{\llbracket m \rrbracket},x_{\llbracket n \rrbracket}); \\
& \cW_{g,m,n}(\xi_{\llbracket m \rrbracket} \mid z_{\llbracket n \rrbracket}) \coloneqq 
\sum_{\ell =1}^m 
\frac{1}{\ell!}
\!\!
\sum_{\substack{
		\bigsqcup_{j=1}^\ell K_j = \llbracket n \rrbracket \\ 
		\bigsqcup_{j=1}^\ell M_j =  \llbracket m \rrbracket\\ 
		M_j \neq \emptyset \\
		g= \sum_{j=1}^\ell g_j + m - \ell \\
		g_1,\ldots,g_{\ell} \geq 0 
}}  \!\!
\prod_{j = 1}^{\ell} W_{g_j,|M_j|+|K_j|}(\xi_{M_j},x_{K_j}),
\end{align}
where we assume that $\xi_i\coloneqq x(w_i)$, $i=1,\dots,m$, and $x_j\coloneqq x(z_j)$, $j=1,\dots,n$. Denote also
\begin{align}
\sum_{d \geq 0} \cQ_{d,m}(z_0)t^{2d} & = \frac{t}{\zeta(t)} \frac{\zeta(tD_{x(z_0)})}{tD_{x(z_0)}} \circ 
\prod_{j = 1}^m \bigg( \left[\vert_{w_j = z_0}\right] \circ \frac{\zeta(tD_{x(w_j)})}{tD_{x(w_j)}}\bigg),
\end{align}
where
\begin{align}
\left[\vert_{w_j = z_0}\right] F(w) &\coloneq \Res_{w=z} F(w)\frac{dx(w)}{x(w)-x(z)}
\,.
\end{align}
These notations allow us to rewrite expression~\eqref{eq:holomorphicexpression} and to reformulate lemma~\ref{lem:holomorphicity-1} as
\begin{corollary}\label{cor:holomorphicinput} The expression
\begin{align}\label{eq:holoinput}
&\cS_{z_0} \left[\sum_{\substack{m \geq 1, d \geq 0 \\ m + 2d \geq 2}} 
\frac{(m+2d-1)!}{m!} \cQ_{d,m}(z_0) \cW_{g-d,m,n} (w_{\llbracket m \rrbracket} \mid z_{\llbracket n \rrbracket}) \right.
\\ \notag & 
\left.
+
\sum_{\alpha=1}^g c_\alpha
\sum_{\substack{m \geq 1, d \geq 0 \\ m + 2d \geq 1}} 
\frac{(m+2d-1+2\alpha)!}{m!} \cQ_{d,m}(z_0) \cW_{g-d-\alpha,m,n} (w_{\llbracket m \rrbracket} \mid z_{\llbracket n \rrbracket})  \right].
\end{align}
	is holomorphic at $z_0\to p$. 
\end{corollary}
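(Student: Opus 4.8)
The plan is to recognize expression~\eqref{eq:holoinput} as expression~\eqref{eq:holomorphicexpression} with a single overall operator $\prod_{i=1}^n D_{x_i}$ applied to it, and then to conclude by Lemma~\ref{lem:holomorphicity-1} together with the observation that $\prod_{i=1}^n D_{x_i}$ cannot destroy holomorphicity in $z_0$. The outer $\cS_{z_0}$, the scalar prefactors $\tfrac{(m+2d-1)!}{m!}$ and their $c_\alpha$-counterparts, and all the combinatorial summations (over $m$, $d$, $\ell$, the set partitions, and the genera $g_j$) are identical in the two expressions, so the task reduces to comparing, for each fixed choice of $(d,m,\ell)$, set partition, and genera, the operator $Q_{d,\emptyset,m}^{(0)}$ acting on $\prod_j\tilde H_{g_j}$ against $\cQ_{d,m}(z_0)$ acting on the matching product of $W_{g_j}$'s that constitutes $\cW$.

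First I would place the definition~\eqref{Qrdef} and the definition of $\cQ_{d,m}(z_0)$ side by side and use the elementary identity $\tfrac{\zeta(zD_{\xi})}{z}=D_{\xi}\cdot\tfrac{\zeta(zD_{\xi})}{zD_{\xi}}$. Thus the internal factor $\prod_{j=1}^m\tfrac{\zeta(zD_{\xi_j})}{z}$ of $Q_{d,\emptyset,m}^{(0)}$ peels off exactly one $D_{\xi_j}$ per internal variable; since each $\xi_j$ occurs in a single factor $\tilde H_{g_j}$, these distribute without cross terms and furnish precisely the factors $D_{\xi_1}\cdots D_{\xi_m}$ present in the definition of $W_{g_j}$, while the surviving $\prod_j\tfrac{\zeta(zD_{\xi_j})}{zD_{\xi_j}}$ is literally the internal factor of $\cQ_{d,m}(z_0)$ (recall $\xi_j=x(w_j)$, so $D_{\xi_j}=D_{x(w_j)}$ and these two operators commute). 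The external factor $\tfrac{\zeta(zD_{x_0})}{zD_{x_0}}$ of $Q^{(0)}$ matches $\tfrac{\zeta(tD_{x(z_0)})}{tD_{x(z_0)}}$ verbatim since $x_0=x(z_0)$, and the prefactors $\tfrac{z}{\zeta(z)}$ agree. The only place requiring care is that the substitution $|_{\xi_j=x_0}$ in $Q^{(0)}$ becomes the residue operation $[\,\vert_{w_j=z_0}]$ in $\cQ_{d,m}(z_0)$: on the level of the global meromorphic functions of Remark~\ref{rem:globalfunction} these agree because $x$ is a local coordinate wherever $z_0$ stays away from the critical points, so that $\Res_{w=z_0}F(w)\tfrac{dx(w)}{x(w)-x(z_0)}=F(z_0)$ for $F$ regular at $z_0$, the general meromorphic case being the incarnation of the formal substitution licensed by the framework of Lemma~\ref{lem:convergence}.

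The one genuine mismatch between the two sides is then the external derivatives: each $W_{g_j}(\xi_{M_j},x_{K_j})$ additionally carries $\prod_{i\in K_j}D_{x_i}$, which has no analogue in $Q_{d,\emptyset,m}^{(0)}$, whose external index range is merely $\{k\}\sqcup K_0=\{0\}$. Collecting these over all blocks $j$ yields a single operator $\prod_{i=1}^n D_{x_i}$; it commutes through $\cQ_{d,m}(z_0)$ (which touches only $x_0$, the $w_j$, and the residue in $w_j$) and through $\cS_{z_0}$ (which touches only $z_0$), and it distributes over $\prod_j\tilde H_{g_j}$ because the $K_j$ partition $\llbracket n\rrbracket$. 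This establishes the desired equality of expression~\eqref{eq:holoinput} with $\prod_{i=1}^n D_{x_i}$ applied to expression~\eqref{eq:holomorphicexpression}, all understood between the meromorphic functions produced by Lemma~\ref{lem:convergence}.

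To finish, I would invoke Lemma~\ref{lem:holomorphicity-1}, which gives holomorphicity of expression~\eqref{eq:holomorphicexpression} as $z_0\to p$, and note that every $D_{x_i}$ with $i\in\llbracket n\rrbracket$ differentiates only in the variable $z_i$ and hence cannot create a singularity in $z_0$; therefore $\prod_{i=1}^n D_{x_i}$ preserves holomorphicity at $z_0\to p$ and the corollary follows. The step I expect to be the main obstacle is the faithful identification of the residue $[\,\vert_{w_j=z_0}]$ with the formal substitution $\xi_j=x_0$ on every summand, in particular on the unstable factors $\tilde H_{0,2}$ carrying the singular part of~\eqref{TildeH02} and on configurations in which several internal variables are sent to $z_0$ at once, where the diagonal poles must be resolved by taking the residues in a controlled order. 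This is precisely the bookkeeping that legitimizes passing from the formal power series to globally defined meromorphic differentials, and it is the part I would write out in complete detail.
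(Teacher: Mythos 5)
Your proposal is correct and matches the paper's (essentially unstated) argument: the paper presents corollary~\ref{cor:holomorphicinput} as a direct notational reformulation of lemma~\ref{lem:holomorphicity-1}, and your proof is exactly the verification that this reformulation holds. You in fact supply more detail than the paper does, correctly isolating the two points it glosses over --- the harmless extra factor $\prod_{i=1}^n D_{x_i}$ hidden in the definition of $W$, and the identification of the residue operator $\left[\vert_{w_j=z_0}\right]$ with the substitution $\xi_j=x_0$.
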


\section{Quadratic loop equations} 
\label{sec:QLE}
In this section we use corollary~\ref{cor:holomorphicinput} and results of~\cite{r-spinFullProof} for the proof of the quadratic loop equations that can be formulated as the following proposition:
\begin{proposition}\label{prop:QLE} For any $g\geq 0$, $n\geq 0$ we have: $\cW_{g,2,n}(w,\bar{w}\mid z_{\llbracket n \rrbracket})$ is holomorphic at $w\to p$. 
\end{proposition}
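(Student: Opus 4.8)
The plan is to prove the statement by induction on the complexity $2g-2+n$, using corollary~\ref{cor:holomorphicinput} as the engine and lemma~\ref{lem:lle} to pass between the off-diagonal quantity $\cW_{g,2,n}(w,\bar w\mid z_{\llbracket n\rrbracket})$ and a diagonal quantity that appears verbatim as one summand of~\eqref{eq:holoinput}. The starting observation is that $\cW_{g,2,n}$ is symmetric in its first two arguments, so the $r=2$ instance of identity~\eqref{Sondiagonal} yields
\begin{equation*}
2\,\cW_{g,2,n}(z_0,\bar z_0\mid z_{\llbracket n\rrbracket}) = \cS_{w_1}\cS_{w_2}\cW_{g,2,n}(w_1,w_2\mid z_{\llbracket n\rrbracket})\big|_{w_1=w_2=z_0} - \cS_{z_0}\cW_{g,2,n}(z_0,z_0\mid z_{\llbracket n\rrbracket})\,.
\end{equation*}
Thus it suffices to prove that both terms on the right-hand side are holomorphic at $z_0\to p$.

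First I would dispose of the doubly symmetrized term. Since $\cW_{g,2,n}$ is a sum of products of the functions $W_{g_j,\cdot}$, each of which is a string of $D_x$-derivatives applied to $\tilde H_{g_j,\cdot}$, and since $D_x$ commutes with $\cS$ and preserves holomorphicity, lemma~\ref{lem:lle} makes $\cS_{w_i}W_{g_j,\cdot}(w_i,\dots)$ holomorphic at $w_i\to p$ in each separate variable. In the $\ell=2$ part of $\cW_{g,2,n}$ the variables $w_1$ and $w_2$ live in different factors, so $\cS_{w_1}\cS_{w_2}$ factorizes into a product of functions holomorphic at $p$, whose diagonal restriction $w_1=w_2=z_0$ is again holomorphic; in the $\ell=1$ part, $W_{g-1,n+2}(w_1,w_2,\dots)$ has no pole on the diagonal $w_1=w_2$, with the single exception of $(g,n)=(1,0)$, where the $(0,2)$-contribution carries the diagonal double pole of $\omega_{0,2}=B$ and must be checked directly from the explicit form of $B$. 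Hence the doubly symmetrized term is holomorphic, and the whole content of the proposition reduces to the holomorphicity of the diagonal quantity $\cS_{z_0}\cW_{g,2,n}(z_0,z_0\mid z_{\llbracket n\rrbracket})$.

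Next I would identify this diagonal quantity as the principal summand of corollary~\ref{cor:holomorphicinput} and remove everything else by induction. For $d=0$ the operator $\cQ_{0,2}(z_0)$ collapses to the two coincidence limits $[\vert_{w_1=z_0}][\vert_{w_2=z_0}]$, which away from $p$ are simply the evaluations $w_j\mapsto z_0$, so the $(m,d)=(2,0)$ summand of the first sum in~\eqref{eq:holoinput} equals exactly one half of $\cS_{z_0}\cW_{g,2,n}(z_0,z_0\mid z_{\llbracket n\rrbracket})$. As corollary~\ref{cor:holomorphicinput} guarantees that the entire expression~\eqref{eq:holoinput} is holomorphic at $z_0\to p$, it remains to show that every other summand is holomorphic. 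Expanding each summand through identity~\eqref{Sondiagonal}, the fully symmetrized part always factorizes into single-variable symmetrizations and is holomorphic by lemma~\ref{lem:lle} as in the previous paragraph, while the antisymmetrized part produces, after the coincidence limits, off-diagonal and diagonal combinations of lower $\cW_{g',2,n'}$ that are governed by the induction hypothesis. Here the bookkeeping is controlled by the fact that the total complexity of the factors in a given summand is $2g-2d-m+n$, which equals the full $2g-2+n$ only for the principal summand $(m,d)=(2,0)$ in the first sum (with $\alpha=0$) and is strictly smaller for all $m\geq 3$, for all $d\geq 1$, and for every term in the $c_\alpha$-sums; the $m=1$ summands carry only one coincidence variable and reduce directly to single symmetrizations. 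Thus every non-principal summand is holomorphic, the principal one is holomorphic by corollary~\ref{cor:holomorphicinput}, and the displayed identity completes the induction.

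The hard part will be the combinatorial matching underlying the last paragraph: one must verify that applying $\cS_{z_0}$ to $\cQ_{d,m}(z_0)\cW_{g-d,m,n}$ distributes cleanly over the coincidence-restricted products. This requires that the differential operators $D_{x(w_j)}$ hidden inside $\cQ_{d,m}$ --- which act before the coincidence limits --- commute with $\cS$ and $\Delta$ up to holomorphic terms, which in turn rests on the identity $x(\bar w)=x(w)$ that makes $D_{x(z_0)}$ commute with both $\cS_{z_0}$ and $\Delta_{z_0}$; and, more seriously, it requires that the antisymmetrized pieces reassemble, via the full sum over set partitions in the cut-and-join, into complete lower $\cW_{g',2,n'}$ quantities rather than bare factors $W$, so that the induction hypothesis genuinely applies. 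This reassembly, together with the treatment of the low-complexity cases in which $\omega_{0,2}=B$ contributes its diagonal double pole, is exactly the mechanism developed in~\cite{r-spinFullProof}; I would invoke it with corollary~\ref{cor:holomorphicinput} supplying the holomorphic input and lemma~\ref{lem:lle} supplying the linear loop equations.
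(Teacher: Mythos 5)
Your overall architecture --- induction on $2g-2+(n+1)$, corollary~\ref{cor:holomorphicinput} as the holomorphic input, identity~\eqref{Sondiagonal} to split off the doubly symmetrized part (disposed of by lemma~\ref{lem:lle}, with the $(0,2)$-diagonal handled separately as in~\eqref{eq:redefinition02}), leaving the doubly antisymmetrized diagonal combination of $\cW_{g,2,n}$ as the unknown --- is the same as the paper's. But there is a genuine gap in the central step, namely your claim that every summand of~\eqref{eq:holoinput} other than $(m,d)=(2,0)$ is holomorphic by the induction hypothesis. Your bookkeeping computes the \emph{total} complexity of a product of factors in $\cW_{g-d,m,n}$ as $2g-2d-m+n$ and concludes it drops for $m\geq 3$; but since $W_{0,1}$ has complexity $-1$, a small total does not bound the complexity of an \emph{individual} factor. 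Concretely, for every $m\geq 2$ and $d=0$ the summand contains terms of the form
\begin{equation*}
\Big(\textstyle\prod_{j=3}^{m} W_{0,1}(\xi_j)\Big)\cdot \cW_{g,2,n}(w_1,w_2\mid z_{\llbracket n\rrbracket})
\end{equation*}
(e.g.\ $\ell=m-1$ with $m-2$ singletons carrying $g_j=0$), so the full-complexity unknown $\cW_{g,2,n}$ reappears in infinitely many summands, dressed by tails of $W_{0,1}$. The induction hypothesis says nothing about these.

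What this forces --- and what your proposal has no substitute for --- is the content of lemma~\ref{lem:secondRSpinLemma} and the final computation in the paper: after subtracting everything genuinely covered by the induction hypothesis (the $c_\alpha$-terms, the $\cQ_{d,m}-\cQ^{\mathsf{red}}_{d,m}$ ``genus defect'', and the lower-complexity pieces inside each $r$-block), what remains is not the single principal summand but
\begin{equation*}
\left[\vert_{w_1 = z_0}\right]\left[\vert_{w_2 = z_0}\right]\Delta_{w_1}\Delta_{w_2}\cW_{g,2,n}(w_1,w_2\mid z_{\llbracket n\rrbracket})
\end{equation*}
multiplied by the infinite series $\sum_{k\geq 1,\ell\geq 0}\frac{(2k+\ell-1)!}{2^{2k+\ell}\ell!(2k)!}\,k\,s^\ell\delta^{2k-2}=\frac{1}{2\left((2-s)^2-\delta^2\right)}$ in $s=\cS_{z_0}W_{0,1}$ and $\delta=\Delta_{z_0}W_{0,1}$. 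One must then check that this series converges near $p$ and is \emph{non-vanishing} there (using $W_{0,1}(x(p))=\tfrac{1}{q+1}$) before one may divide by it and conclude holomorphicity of the $\Delta\Delta$-term; one also needs lemma~\ref{lem:cxanalysis} to justify extracting holomorphic sub-sums from an infinite sum. Without this resummation and non-vanishing argument your induction does not close, so the proposal as written is incomplete at precisely the technically hardest point.
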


\begin{remark}
In order to see that this really gives the quadratic loop equation, as given in \eqref{QLE}, note that
\begin{equation}
\cW_{g,2,n}(w,\bar{w} \mid x_{\llbracket n\rrbracket}) = W_{g-1,n+2}(w, \bar{w}, z_{\llbracket n \rrbracket}) + \sum_{\substack{K_1 \sqcup K_2 = \llbracket n \rrbracket \\ g = g_1 + g_1}} W_{g_1,1+|K_1|}(w,z_{K_1}) W_{g_2,1+|K_2|}(\bar{w},z_{K_2})\,.
\end{equation}
Furthermore, $ d_1 \otimes \dotsb \otimes d_n H_{g,n}(z_{\llbracket n\rrbracket} ) = W_{g,n}(z_{\llbracket n \rrbracket} ) \prod_{i=1}^n \frac{dx(z_i)}{x(z_i)}$. Therefore, $\cW_{g,2,n}(w,\bar{w}\mid z_{\llbracket n \rrbracket})$ is holomorphic at $ w \to p$ if and only if \eqref{QLE} is holomorphic with double zero there.
\end{remark}

Let us explain the strategy of the proof. We prove this proposition by induction on the negative Euler characteristic, that is, on $2g-2+(n+1)$. We split the known to be holomorphic at $z_0\to p$ expression~\eqref{eq:holoinput}, which is an infinite sum of meromorphic functions converging in the sense of lemma~\ref{lem:convergence}, into a sum of two converging infinite sums, where one sum is holomorphic once the quadratic loop equations hold for all $(g',n')$ with $2g'-2+(n'+1)<2g-2+(n+1)$, and the other sum is holomorphic if and only if the quadratic loop equation holds for $(g,n)$.

To this end, we have to recall some of the results of~\cite{r-spinFullProof}. First of all, we need a change of notation in the case when we apply $\Delta_{w_i}\Delta_{w_j}$ and $\cS_{w_i}\cS_{w_j}$ operators to $W_{0,2}(\xi_i,\xi_j))$, $\xi_i=x(w_i)$, $\xi_j=x(w_j)$ (which is a possible factor in $\cW$)---see~\cite[section 3.1]{r-spinFullProof} for a motivation of this change of notation. So, we redefine
\begin{align}\label{eq:redefinition02}
\widetilde{\Delta_{w_i}\Delta_{w_j} } W_{0,2}(\xi_i,\xi_j) & \coloneqq \Delta_{w_i}\Delta_{w_j} W_{0,2}(\xi_i,\xi_j)-\frac{2}{(\log\xi_i-\log\xi_j)^2}\,;
\\ \notag 
\widetilde{\cS_{w_i}\cS_{w_j} } W_{0,2}(\xi_i,\xi_j) & \coloneqq \cS_{w_i}\cS_{w_j} W_{0,2}(\xi_i,\xi_j) +\frac{2}{(\log\xi_i-\log\xi_j)^2}\,.
\end{align}
From now on, we use this modified definition, and abusing notation we always omit the tildes.

Recall that all $H_{g,n}$'s satisfy the linear loop equations (lemma~\ref{lem:lle}). Under the assumption that the quadratic loop equations hold for all $(g',n')$ with $2g'-2+(n'+1)<2g-2+(n+1)$  the following two lemmas hold: 

\begin{lemma} \label{lem:corollaryDKPS}
For any $r \geq 0$ and any $h, k \geq 0$ such that $ 2h-1+k -r\leq 2g-2+n$, the expression
\begin{equation}\label{eq:corollaryDKPS}
\sum_{m=1}^{r+1}\frac{1}{m!} \sum_{\substack{2\alpha_1 + \dotsb + 2\alpha_m \\ + m = r+1}} \prod_{j=1}^m \bigg( \left[\vert_{w_j = z_0}\right] \frac{D_{x(w_j)}^{2\alpha_j}}{(2\alpha_j+1)!} \bigg) \sum_{\substack{I\sqcup J = \llbracket m \rrbracket \\ |I|\in 2\mathbb{Z} }} \prod_{i\in I} \Delta_{w_i} \prod_{j\in J} \cS_{w_j} \cW_{h- \alpha_1 - \dotsc - \alpha_m,m,k} (w_{\llbracket m\rrbracket} \mid z_{\llbracket k\rrbracket}) 
\end{equation}
as well as its arbitrary $D_{x(z_0)}$-derivatives, is holomorphic at $z_0\to p$.
\end{lemma}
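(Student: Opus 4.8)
The plan is to prove Lemma~\ref{lem:corollaryDKPS} by reducing it to the
already-established holomorphicity of expression~\eqref{eq:holoinput} from
Corollary~\ref{cor:holomorphicinput}, combined with the symmetrization
identity~\eqref{Sondiagonal} and the results of~\cite{r-spinFullProof}.
The key observation is that the operators $\cQ_{d,m}(z_0)$ appearing in
\eqref{eq:holoinput} are built precisely from the restriction operators
$[\vert_{w_j=z_0}]$ and the kernels $\zeta(tD_{x(w_j)})/(tD_{x(w_j)})$, whose
Taylor expansion in $t$ produces exactly the operators
$D_{x(w_j)}^{2\alpha_j}/(2\alpha_j+1)!$ that occur in \eqref{eq:corollaryDKPS}.
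First I would expand $\cQ_{d,m}(z_0)$ and $\cS_{z_0}$ in
Corollary~\ref{cor:holomorphicinput} in powers of the auxiliary variable and
collect, for each fixed total degree $r+1 = m + 2(\alpha_1+\cdots+\alpha_m)$,
the homogeneous piece; this isolates an expression of the shape
\eqref{eq:corollaryDKPS} but with $\cS_{z_0}$ acting on the whole diagonal
restriction $w_j = z_0$ rather than with the internal
$\cS_{w_j}, \Delta_{w_j}$ split.

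Second, I would apply identity~\eqref{Sondiagonal} with $z = z_0$ to rewrite
$\cS_{z_0}$ acting on a function restricted to the diagonal $w_j = z_0$ as
$2^{1-m}$ times a sum over $I \sqcup J = \llbracket m \rrbracket$ with $|J|$
even of $\prod_{i\in I}\cS_{w_i}\prod_{j\in J}\Delta_{w_j}$ applied before
restriction. Matching the parity conventions (the lemma requires $|I|$ even,
which corresponds to the even-$\Delta$ factor count in \eqref{Sondiagonal}
after accounting for the single outer symmetrizer on $z_0$), this converts the
collected homogeneous piece of \eqref{eq:holoinput} directly into the
expression~\eqref{eq:corollaryDKPS} up to an overall nonzero constant. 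Since a
convergent sum of terms each holomorphic at $z_0 \to p$ is holomorphic, and
since the full sum \eqref{eq:holoinput} is holomorphic, the holomorphicity of
each homogeneous component in $r$ follows provided the decomposition into
homogeneous pieces is compatible with the convergence established in
Lemma~\ref{lem:convergence}.

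Third, I would address the range conditions: the lemma fixes $h$, $k$, and $r$
with $2h-1+k-r \le 2g-2+n$, whereas Corollary~\ref{cor:holomorphicinput}
packages a particular total genus $g$ and number of marked points $n$. The
point is that the parameters $(d,m,\alpha)$ in \eqref{eq:holoinput} translate
into $(h,k,r)$ via $h = g-d$ (resp.\ $h = g-d-\alpha$ in the $c_\alpha$ sum),
$k = n$, and $r+1 = m + 2d$ (resp.\ involving the extra $2\alpha$), so that the
inductive hypothesis on quadratic loop equations for all strictly smaller
$2g'-2+(n'+1)$ supplies holomorphicity of every $\cW_{h',m',k'}$ that is not
the top term. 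Here I would invoke the relevant structural result
of~\cite{r-spinFullProof} that governs how the $\Delta_{w_i}, \cS_{w_j}$
operators interact with the redefined $W_{0,2}$ factors
of~\eqref{eq:redefinition02}, ensuring the singular parts cancel correctly and
that the $D_{x(z_0)}$-derivatives preserve holomorphicity (as in the linear
loop equation argument of Lemma~\ref{lem:lle}, differentiation by $d/dx$
commutes with the involution and hence preserves holomorphicity at $p$).

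The main obstacle I anticipate is bookkeeping the correspondence between the
two summation schemes: the generating-series operator $\cQ_{d,m}(z_0)$ in
\eqref{eq:holoinput} bundles the factors $\zeta(tD)/(tD)$ symmetrically over
$m$ auxiliary points, while \eqref{eq:corollaryDKPS} presents them with an
explicit split into $I$ and $J$ with a parity constraint. Verifying that the
combinatorial factors $1/m!$, $(2\alpha_j+1)!$, and the $2^{1-m}$ from
\eqref{Sondiagonal} assemble into exactly the coefficients written in
\eqref{eq:corollaryDKPS}, and that the parity condition $|I|\in 2\mathbb{Z}$
emerges correctly from applying one outer $\cS_{z_0}$, is where the real care is
required; the holomorphicity itself is then essentially automatic once the
identification is in place, granting the induction hypothesis and the
convergence from Lemma~\ref{lem:convergence}.
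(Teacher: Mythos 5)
Your proposal inverts the logical architecture of the paper and, as a result, rests on a false inference. You propose to derive \eqref{eq:corollaryDKPS} by extracting, from the holomorphic expression \eqref{eq:holoinput} of corollary~\ref{cor:holomorphicinput}, the homogeneous component of fixed degree $r+1=m+2\sum_j\alpha_j$, and then argue that ``since the full sum \eqref{eq:holoinput} is holomorphic, the holomorphicity of each homogeneous component in $r$ follows.'' That implication does not hold: a convergent sum of meromorphic functions can be holomorphic at $p$ while every individual grouping of its terms has a pole there (lemma~\ref{lem:cxanalysis} only goes in the opposite direction). Worse, the claim is circular in this context: the homogeneous components of \eqref{eq:holoinput-2} contain, for $m=2$, $d=0$, $I=\{1,2\}$, precisely the term $\left[\vert_{w_1 = z_0}\right]\left[\vert_{w_2 = z_0}\right]\Delta_{w_1}\Delta_{w_2}\cW_{g,2,n}$, whose holomorphicity is the quadratic loop equation for $(g,n)$ --- the very statement proposition~\ref{prop:QLE} sets out to prove. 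The whole point of the proof of that proposition is that the individual $r$-components of \eqref{eq:finalpart} are \emph{not} known to be holomorphic a priori; one subtracts the pieces controlled by lemmas~\ref{lem:corollaryDKPS} and~\ref{lem:secondRSpinLemma} from the known-holomorphic total in order to isolate the unknown term. If lemma~\ref{lem:corollaryDKPS} were itself a consequence of corollary~\ref{cor:holomorphicinput}, that subtraction scheme would collapse.

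The intended proof makes no use of the cut-and-join equation at all: expression \eqref{eq:corollaryDKPS} is a purely formal combination of the $\cW$'s, and the paper obtains its holomorphicity by directly citing \cite[corollary 3.4]{r-spinFullProof}, whose hypotheses are exactly the linear loop equations (lemma~\ref{lem:lle}, known for all $(g,n)$) together with the quadratic loop equations for all $(g',n')$ of smaller negative Euler characteristic (the standing induction hypothesis). The mechanism there is local: every summand carrying at least one $\cS_{w_j}$ on a $W$-factor is holomorphic by the linear loop equations, while the $\Delta$'s, which occur in even number because of the constraint $|I|\in 2\mathbb{Z}$, pair up into combinations governed by the inductively known quadratic loop equations; the bound $2h-1+k-r\leq 2g-2+n$ is what guarantees that every such pair falls within the induction hypothesis, and the $D_{x(z_0)}$-derivatives are harmless because $x$ is invariant under the deck transformation. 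Note also that the lemma is stated for arbitrary $h$, $k$, $r$ in that range ($k$ need not equal $n$), which already signals that it cannot be read off from corollary~\ref{cor:holomorphicinput} for the fixed $(g,n)$. Your bookkeeping of $\cQ_{d,m}$, of $\zeta(tD)/(tD)$, and of the identity \eqref{Sondiagonal} correctly reconstructs how the lemma is \emph{applied} in the proof of proposition~\ref{prop:QLE}, but it does not prove the lemma.
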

\begin{proof} This is a direct corollary of~\cite[corollary 3.4]{r-spinFullProof}.
\end{proof}

\begin{lemma}\label{lem:secondRSpinLemma} For any $r\geq 1$
\begin{align}\label{eq:expr-r}
& \sum_{\substack{k,\alpha_1,\dots,\alpha_{2k}\\ \ell, \beta_1,\dots,\beta_\ell \\ 2k+2\alpha_1+\cdots+2\alpha_{2k} \\ +\ell + 2\beta_1+\cdots+2\beta_{\ell} = r+1}}\!\!\!\!\!\!\!\!\! \frac{1}{\ell! (2k)!}\prod_{i=1}^\ell \left[\vert_{w'_i = z_0}\right] \frac{D_{x(w'_i)}^{2\beta_i}}{(2\beta_i+1)!} \cS_{w'_i} 
\prod_{i=1}^{2k} \left[\vert_{w_i = z_0}\right] \frac{D_{x(w_i)}^{2\alpha_i}}{(2\alpha_i+1)!} \Delta_{w_i} \cW_{g+(2k+\ell-r-1)/2,\ell+2k,n} (w'_{\llbracket \ell\rrbracket},w_{\llbracket 2k\rrbracket} \mid z_{\llbracket n\rrbracket}) 
\\ \notag &
-\sum_{2k+\ell = r+1} \frac{1}{\ell! (2k)!} \binom{k}{1} \left(\cS_{z_0} W_{0,1}(x(z_0))\right)^\ell \left(\Delta_{z_0} W_{0,1}(x(z_0))\right)^{2k-2} \left[\vert_{w_1 = z_0}\right] \left[\vert_{w_2 = z_0}\right] \Delta_{w_1} \Delta_{w_2} \cW_{g,2,n}(w_1,w_2 \mid z_{\llbracket n \rrbracket})
\end{align}
is holomorphic at $ z_0 \to p$.
\end{lemma}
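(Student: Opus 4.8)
The plan is to obtain this statement as the orbifold counterpart of the corresponding step of~\cite{r-spinFullProof}, feeding in the holomorphic input of corollary~\ref{cor:holomorphicinput} and using the same local analysis at $p$ that already yields lemma~\ref{lem:corollaryDKPS}. The role of the index $r$ is that it records a homogeneous grading of that holomorphic input; that each graded piece is \emph{separately} holomorphic is exactly what the strengthening ``as well as its arbitrary $D_{x(z_0)}$-derivatives'' in lemma~\ref{lem:corollaryDKPS} is meant to provide, since $D_{x(z_0)}$ acts as a degree operator on the resummed tails and thus lets one extract each graded piece.

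Concretely, I would first use identity~\eqref{Sondiagonal} to resolve the operator $\cS_{z_0}$ in~\eqref{eq:holoinput} on the diagonal $w_j = z_0$, turning it into the sum over $I \sqcup J = \llbracket m \rrbracket$ with $|I|$ even of $\prod_{i \in I} \Delta_{w_i} \prod_{j \in J} \cS_{w_j}$; relabelling $|I| = 2k$, $|J| = \ell$ and matching the prefactors $\tfrac1{m!}\binom{m}{2k} = \tfrac{1}{(2k)!\,\ell!}$ reproduces exactly the first sum of~\eqref{eq:expr-r}. In parallel I would peel off the infinite tail of factors $D_{x(z_0)}\tilde H_{0,1} = W_{0,1} = z_0^q$ exactly as in the proof of lemma~\ref{lem:convergence}: these tails resum to explicit rational functions of $z_0^q$ and $\bar{z}_0^q$, holomorphic at $p$ because $p^q = (q+1)^{-1} \neq 1$. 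This simultaneously restores convergence in the sense of lemma~\ref{lem:convergence} and collapses the infinite sum into the finitely many terms recorded by a fixed $r$, so that~\eqref{eq:expr-r} makes sense term by term.

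The heart of the argument is then the classification of the non-holomorphic contributions. Expanding $\cS_{z_0}$ of a product of factors through the identity $\cS(fg) = \tfrac12(\cS f\,\cS g + \Delta f\,\Delta g)$, every purely symmetric factor is holomorphic by the linear loop equations (lemma~\ref{lem:lle}), while each surviving $\Delta\!-\!\Delta$ pairing is precisely a quadratic loop equation combination. By the induction hypothesis all such pairings sitting on factors of negative Euler characteristic strictly below $2g-2+(n+1)$ are holomorphic, and lemma~\ref{lem:corollaryDKPS} bundles exactly these and certifies their holomorphicity (together with the $D_{x(z_0)}$-derivatives needed to separate the $r$-grading). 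The only obstruction \emph{not} covered by the induction hypothesis is the single top-complexity pairing, i.e.\ the one landing on $\cW_{g,2,n}(w_1,w_2\mid z_{\llbracket n\rrbracket})$ while the remaining $w$-variables reduce to spectator factors $W_{0,1}$; after restriction these spectators contribute $(\cS_{z_0}W_{0,1})^\ell(\Delta_{z_0}W_{0,1})^{2k-2}$, and this residual term is precisely the second sum of~\eqref{eq:expr-r}. Subtracting it leaves a combination whose $\Delta\!-\!\Delta$ pairings are all induction-covered, hence holomorphic.

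I expect the main obstacle to be the exact combinatorial bookkeeping of this last step, namely matching the weight of the top-complexity term to the stated $\binom{k}{1}$. A crude count of the ways to single out two of the $2k$ antisymmetrizing slots gives $\binom{2k}{2}$, so the collapse to $\binom{k}{1}$ must be forced by the pairing structure of the loop equations (the same parity that makes $|I|$ even) together with the resummation of the $W_{0,1}$-tail, and it requires the regularisation~\eqref{eq:redefinition02} of $\cS_{w_i}\cS_{w_j}$ and $\Delta_{w_i}\Delta_{w_j}$ acting on $W_{0,2}$ in order to kill the spurious diagonal double poles. A secondary point is to check that the orbifold corrections $c_\alpha$, entering through the factor $t/\zeta(t)$ inside $\cQ_{d,m}$ and through the explicit $c_\alpha$-sum in~\eqref{eq:holoinput}, are carried along by the same resummation and create no new poles at $p$; structurally this is identical to the non-orbifold case of~\cite{r-spinFullProof}, so once the dictionary is fixed the verification should be routine.
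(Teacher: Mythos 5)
Your third paragraph contains the actual content of this lemma, and it is essentially right: one expands $\cW_{g+(2k+\ell-r-1)/2,\ell+2k,n}$ into products of $W$'s, distributes the $\cS$'s and $\Delta$'s over the factors, and observes that every resulting combination is holomorphic at $p$ by the linear loop equations (lemma~\ref{lem:lle}) and by the quadratic loop equations at strictly smaller $2g'-2+(n'+1)$ (the induction hypothesis), with the single exception of the configurations where a $\Delta\Delta$-pair lands on the top-complexity piece $\cW_{g,2,n}$ while all other $w$-slots carry $W_{0,1}$-spectators; collecting these gives exactly the second sum of \eqref{eq:expr-r}, so the difference is holomorphic. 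This is precisely what the paper packages into the citation of \cite[corollary~3.4 and remark~3.3]{r-spinFullProof}, so on this point you have reconstructed the intended mechanism --- up to the coefficient $\binom{k}{1}$, which you flag but do not derive: it comes from organizing the $2k$ antisymmetrized slots into $k$ $\Delta\Delta$-pairs and choosing which single pair carries the defect, as in \emph{op.~cit.}, not from a count of $\binom{2k}{2}$ unordered slots.

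However, your first two paragraphs introduce a genuine error of architecture. The lemma is a purely formal statement about the finite expression \eqref{eq:expr-r} at fixed $r$: its proof uses only lemma~\ref{lem:lle} and the induction hypothesis, and takes no input from corollary~\ref{cor:holomorphicinput}. You instead propose to ``feed in'' \eqref{eq:holoinput} and to certify that the first sum of \eqref{eq:expr-r} is itself holomorphic for each $r$ by extracting ``graded pieces'' with $D_{x(z_0)}$-derivatives. That step fails: $D_{x(z_0)}$ is a derivative in the spectral-curve variable, not a grading operator for the $t$-expansion of $\cQ_{d,m}$, and no operator isolates a fixed $r$ from the convergent sum of lemma~\ref{lem:convergence}; only a specific weighted sum over $r$ (and only after removing the $c_\alpha$- and $\cQ_{d,m}-\cQ^{\mathsf{red}}_{d,m}$-parts) is known to be holomorphic, and that is established later, inside the proof of proposition~\ref{prop:QLE}. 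Moreover, if the first sum of \eqref{eq:expr-r} were separately holomorphic for each $r$, then already the $r=1$, $(k,\ell)=(1,0)$ defect term would give the quadratic loop equation outright and the entire subtraction-and-inversion argument of proposition~\ref{prop:QLE} would be superfluous --- a sign that this cannot be the intended route. The $W_{0,1}$-tail resummation and convergence discussion of your second paragraph likewise belongs to the proof of the proposition, where one sums over $r$, not to this lemma, for which each \eqref{eq:expr-r} is a finite sum. In short: your third paragraph is the proof; the scaffolding of the first two should be discarded.
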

\begin{proof} This is a direct corollary of~\cite[corollary 3.4 and remark 3.3]{r-spinFullProof}. Note that the sum over $\alpha$s and $ \beta$s in the first line is the same as the sum over $\alpha$s in \eqref{eq:expr-r}, but split depending on whether $ \cS$ or $\Delta$ acts on the corresponding variable.
\end{proof}

Another statement that we need is the following. Let $f_i(z)$, $i\in \mathbb{Z}_{\geq 0}$ be a sequence of meromorphic functions defined on an open neighborhood $U$ of the point $p$ with the orders of poles  bounded by some constant. Assume $\sum_{i=0}^\infty f_i(z)$ converges absolutely and uniformly on every compact subset of $U\setminus \{p\}$ to a function of the same type, that is, to a meromorphic function $f(z)$ on $U$ with a possible pole only at the point $p$ with the order of the pole bounded by the same constant. Assume that we can split $\mathbb{Z}_{\geq 0}$ into a sequence of pairwise disjoint finite subsets $I_k$, $k=1,2,3,\dots$, $\mathbb{Z}_{\geq 0}=\bigsqcup_{k=1}^\infty I_k$, such that $\sum_{i\in I_k} f_i(z)$ is holomorphic at $z\to p$ for every $k$. Then we have:
\begin{lemma} \label{lem:cxanalysis} The sum $f(z)=\sum_{i=0}^\infty f_i(z)$ is holomorphic at $z\to p$. 
\end{lemma}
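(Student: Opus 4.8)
The plan is to show holomorphicity at $p$ by proving that the principal part of the Laurent expansion of $f$ at $p$ vanishes, and to do this by identifying the principal part of $f$ with the sum of the principal parts of the $f_i$, which can then be regrouped according to the partition $\mathbb{Z}_{\geq 0} = \bigsqcup_k I_k$. The key point is that each coefficient of a negative power of a local coordinate is extracted by a residue (a contour integral on a small circle around $p$ that stays in $U \setminus \{p\}$), and absolute-uniform convergence on that compact circle is exactly what licenses interchanging the infinite sum with the integral.

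First I would fix a local coordinate $t$ centered at $p$ (so $t = z - p$, say), valid on a disc $\{0 < |t| \le \rho\} \subset U$, and choose the circle $\gamma_\rho = \{|t| = \rho\}$, which is a compact subset of $U \setminus \{p\}$. By hypothesis the orders of poles of all $f_i$ and of $f$ are bounded by a single constant $N$, so the Laurent coefficient of $t^{-k}$ in any of these functions, for $1 \le k \le N$, is $\frac{1}{2\pi i}\oint_{\gamma_\rho} f(t)\, t^{k-1}\, dt$ (and similarly with $f_i$). Since $\sum_i f_i$ converges to $f$ absolutely and uniformly on $\gamma_\rho$, and $t^{k-1}$ is bounded there, I can interchange summation and integration to obtain, for each $1 \le k \le N$,
\begin{equation*}
[t^{-k}]\, f = \sum_{i=0}^\infty [t^{-k}]\, f_i\,,
\end{equation*}
where the right-hand side converges absolutely. (For $k > N$ every term, and hence $f$, has vanishing coefficient by the uniform bound on pole orders, so there is nothing to prove.)

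The remaining step is the regrouping. Because each partial sum $\sum_{i \in I_k} f_i$ is a finite sum that is holomorphic at $p$, every Laurent coefficient of a negative power vanishes for it: $\sum_{i \in I_k} [t^{-j}] f_i = 0$ for all $j \ge 1$ and all $k$. Since the series $\sum_i [t^{-j}] f_i$ converges absolutely for each fixed $j \in \{1,\dots,N\}$, I may reorder and group its terms according to the $I_k$, giving
\begin{equation*}
[t^{-j}]\, f = \sum_{k=1}^\infty \Big( \sum_{i \in I_k} [t^{-j}]\, f_i \Big) = \sum_{k=1}^\infty 0 = 0
\end{equation*}
for every $j \ge 1$. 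Thus $f$ has no principal part at $p$ and is holomorphic there.

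The main obstacle is purely the justification of the two interchanges of summation with a limit: the interchange of sum and contour integral, and the regrouping of an absolutely convergent series of Laurent coefficients. Both are controlled entirely by the absolute-uniform convergence on compact subsets of $U \setminus \{p\}$ together with the uniform bound $N$ on the orders of poles, which is precisely the data built into the hypotheses; so while this is the technical heart, it presents no genuine difficulty once the residue-extraction viewpoint is adopted. I would take care only to note explicitly that the bound $N$ guarantees $f$ itself is meromorphic with a pole of order at most $N$, so that its principal part is a \emph{finite} object whose vanishing coefficient-by-coefficient is equivalent to holomorphicity.
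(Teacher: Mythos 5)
Your argument is correct. Note that the paper states lemma~\ref{lem:cxanalysis} without giving any proof at all, treating it as a standard fact of complex analysis, so there is no argument in the text to compare against. Your residue-extraction proof --- extracting each negative Laurent coefficient by a contour integral over a circle $\gamma_\rho \subset U\setminus\{p\}$, using the absolute-uniform convergence on that compact circle to interchange sum and integral (and to get absolute convergence of the resulting numerical series), and then regrouping that absolutely convergent series along the partition $\mathbb{Z}_{\geq 0}=\bigsqcup_k I_k$ so that each block contributes $0$ --- is precisely the kind of justification the authors leave implicit, and every step is licensed by the stated hypotheses. The only point worth stating explicitly is that the $f_i$ are assumed to have no poles in the punctured neighbourhood other than at $p$ itself (which is how the lemma is applied in the paper); otherwise the integral over $\gamma_\rho$ would not compute the Laurent coefficient at $p$. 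The restriction to $1\le k\le N$ is harmless but also unnecessary, since the interchange argument shows the vanishing of $[t^{-k}]f$ for every $k\ge 1$ directly.
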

Now we are ready to prove proposition~\ref{prop:QLE}.

\begin{proof}[Proof of proposition~\ref{prop:QLE}] As stated before, the proof works by induction on $2g-2+(n+1)$. First, equation~\eqref{Sondiagonal} and the holomorphicity at $z_0\to p$ of the expression~\eqref{eq:holoinput} imply that the following expression is holomorphic  at $z_0\to p$: 
\begin{align}\label{eq:holoinput-2}
&\sum_{\substack{m \geq 1, d \geq 0 \\ m + 2d \geq 2}} 
\frac{(m+2d-1)!}{m!2^m} \cQ_{d,m}(z_0) \sum_{\substack{I\sqcup J = \llbracket m \rrbracket \\ |I|\in 2\mathbb{Z} }} \prod_{i\in I} \Delta_{w_i} \prod_{j\in J} \cS_{w_j} \cW_{g-d,m,n} (w_{\llbracket m \rrbracket} \mid z_{\llbracket n \rrbracket})
\\ \notag & 
+
\sum_{\alpha=1}^g c_\alpha
\sum_{\substack{m \geq 1, d \geq 0 \\ m + 2d \geq 1}} 
\frac{(m+2d-1+2\alpha)!}{m!2^m} \cQ_{d,m}(z_0) 
\sum_{\substack{I\sqcup J = \llbracket m \rrbracket \\ |I|\in 2\mathbb{Z} }} \prod_{i\in I} \Delta_{w_i} \prod_{j\in J} \cS_{w_j}
\cW_{g-d-\alpha,m,n} (w_{\llbracket m \rrbracket} \mid z_{\llbracket n \rrbracket})  .
\end{align}	
We split this expression into three parts and analyse them separately. 

The first part is the second summand. We consider
\begin{align}\label{eq:hologenusdefect}
&\sum_{\alpha=1}^g c_\alpha
\sum_{\substack{m \geq 1, d \geq 0 \\ m + 2d \geq 1}} 
\frac{(m+2d-1+2\alpha)!}{m!2^m} \cQ_{d,m}(z_0) 
\sum_{\substack{I\sqcup J = \llbracket m \rrbracket \\ |I|\in 2\mathbb{Z} }} \prod_{i\in I} \Delta_{w_i} \prod_{j\in J} \cS_{w_j}
\cW_{g-d-\alpha,m,n} (w_{\llbracket m \rrbracket} \mid z_{\llbracket n \rrbracket})  .
\end{align}	
Note that this expression is an infinite sum of the products of derivatives of the function $\{\tilde H_{g,n}\}_{g,n}$, and it absolutely uniformly converges  to a meromorphic function on $U\setminus \{p\}$ in the variable $z_0$, where $U$ is an open neighborhood of the point $p$. The proof of that is exactly the same as the proof of lemma~\ref{lem:convergence}: we have a finite number of terms with no factors of $\cS_{w} D_{\xi} \tilde H_{0,1}(\xi(w))$ and $\Delta_{w} D_{\xi} \tilde H_{0,1}(\xi(w))$ multiplied by a geometrically converging series in $\cS_{w} D_{\xi} \tilde H_{0,1}(\xi(w))$ and $\Delta_{w} D_{\xi} \tilde H_{0,1}(\xi(w))$. On the other hand, we can rewrite this expression as the sum over $r+1=m+2d$ and then for each fixed $r+1$ we have a finite expression, which is holomorphic at $z_0\to p$ according to lemma~\ref{lem:corollaryDKPS}, and using the induction hypothesis along with the fact that $ \alpha \geq 1$. Thus expression~\eqref{eq:hologenusdefect} satisfies the conditions of lemma~\ref{lem:cxanalysis}, and therefore \eqref{eq:hologenusdefect} converges to a holomorphic function on $U$. 

Introduce a new notation:
\begin{align}
\sum_{d \geq 0} \cQ^{\mathsf{red}}_{d,m}(z_0)t^{2d} & \coloneqq
\prod_{j = 1}^m \bigg( \left[\vert_{w_j = z_0}\right] \circ \frac{\zeta(tD_{x(w_j)})}{tD_{x(w_j)}}\bigg)\,.
\end{align}
This is the `leading order' part of $\cQ_{d,m}$ in the sense that it does not include the global derivatives of $ z_0$ or the extra $\frac{z}{\zeta (z)}$, which would lead to terms that have been shown to be holomorphic in earlier steps of the induction.
The second part is then all these extra terms, the ``genus defect'' part of the first summand in~\eqref{eq:holoinput-2}. We consider
\begin{align}\label{eq:holoinput-2-fisrtgendef}
&\sum_{\substack{m \geq 1, d \geq 0 \\ m + 2d \geq 2}} 
\frac{(m+2d-1)!}{m!2^m} \left(\cQ_{d,m}(z_0)- \cQ^{\mathsf{red}}_{d,m}(z_0)\right)\sum_{\substack{I\sqcup J = \llbracket m \rrbracket \\ |I|\in 2\mathbb{Z} }} \prod_{i\in I} \Delta_{w_i} \prod_{j\in J} \cS_{w_j} \cW_{g-d,m,n} (w_{\llbracket m \rrbracket} \mid z_{\llbracket n \rrbracket}).
\end{align}
Literally the same argument as in the case of expression~\eqref{eq:hologenusdefect} proves that~\eqref{eq:holoinput-2-fisrtgendef} converges to a  holomorphic function on $U$.

The third part is equal to 
\begin{align}\label{eq:finalpart}
&\sum_{\substack{m \geq 1, d \geq 0 \\ m + 2d \geq 2}} 
\frac{(m+2d-1)!}{m!2^m} \cQ^{\mathsf{red}}_{d,m}(z_0)\sum_{\substack{I\sqcup J = \llbracket m \rrbracket \\ |I|\in 2\mathbb{Z} }} \prod_{i\in I} \Delta_{w_i} \prod_{j\in J} \cS_{w_j} \cW_{g-d,m,n} (w_{\llbracket m \rrbracket} \mid z_{\llbracket n \rrbracket})
\\ \notag 
& =  \sum_{r=1}^\infty \frac{r!}{2^{r+1}}  \!\!\!\!\!\!\!\!\sum_{\substack{k,\alpha_1,\dots,\alpha_{2k}\\ \ell, \beta_1,\dots,\beta_\ell \\ 2k+\alpha_1+\cdots+\alpha_{2k} \\ +\ell + \beta_1+\cdots+\beta_{\ell} = r+1}} \!\!\!\!\!\!\!\! \frac{1}{\ell! (2k)!}
\\ \notag
& \phantom{=\ }
\prod_{i=1}^\ell \left[\vert_{w'_i = z_0}\right] \frac{D_{x(w'_i)}^{2\beta_i}}{(2\beta_i+1)!} \cS_{w'_i} 
\prod_{i=1}^{2k} \left[\vert_{w_i = z_0}\right] \frac{D_{x(w_i)}^{2\alpha_i}}{(2\alpha_i+1)!} \Delta_{w_i} 
 \cW_{g+2k+\ell-r-1,\ell+2k,n} (w'_{\llbracket \ell\rrbracket},w_{\llbracket 2k\rrbracket} \mid z_{\llbracket n\rrbracket}) \,,
\end{align}
and it must be holomorphic as $ z_0 \to p$, as it is the difference of equations~\eqref{eq:holoinput-2} and~\eqref{eq:hologenusdefect}, \eqref{eq:holoinput-2-fisrtgendef}.
Each of the $r$-summands of equation~\eqref{eq:finalpart} corresponds to the first part of the equation in lemma~\ref{lem:secondRSpinLemma}. By the same arguments as before, the sum over all $r$ of the expression in lemma~\ref{lem:secondRSpinLemma},
\begin{align}\label{eq:secondRSpinLemma2}
&\sum_{r= 1}^\infty
\frac{r!}{2^{r+1}} \mathsf{Expr}_r,
\end{align}
where $\mathsf{Expr}_r $ is equal to~\eqref{eq:expr-r},
still converges absolutely and uniformly on $U\setminus \{p\}$ in the variable $z_0$, and is holomorphic as $ z_0 \to p$.
Because of this, the difference between equations~\eqref{eq:finalpart} and~\eqref{eq:secondRSpinLemma2} must also be holomorphic. Explicitly, this is
\begin{align}\label{eq:QLEtimesInv}
& \sum_{\substack{k\geq 1\\ \ell \geq 0}}\frac{(2k+\ell -1)!}{2^{2k+\ell}\ell! (2k)!} \binom{k}{1} \left(\cS_{z_0} W_{0,1}(x(z_0))\right)^\ell \left(\Delta_{z_0} W_{0,1}(x(z_0))\right)^{2k-2} 
\\ \notag & 
\times\left[\vert_{w_1 = z_0}\right] \left[\vert_{w_2 = z_0}\right] \Delta_{w_1} \Delta_{w_2} \cW_{g,2,n}(w_1,w_2 \mid z_{\llbracket n \rrbracket}) \,.
\end{align}
To analyse this expression, let us first consider the sum
\begin{align}
\sum_{\substack{k\geq 1\\ \ell \geq 0}}\frac{(2k+\ell -1)!}{2^{2k+\ell}\ell! (2k)!} k \, s^\ell \delta^{2k-2} = \frac{1}{2\big((2-s)^2-\delta^2\big)}.
\end{align}
For $s=\cS_{z_0} W_{0,1}(x(z_0))$ and $\delta=\Delta_{z_0} W_{0,1}(x(z_0))$ both $(s+\delta)/2$ and $(s-\delta)/2$ belong to the unit ball for $z_0$ near $p$, as $ W_{0,1}(x(p)) = \frac{1}{q+1}$. Therefore, this expression defines a holomorphic function on $U$ in the variable $z_0$, non-vanishing at $z_0\to p$. This implies that 
\begin{equation}
\left[\vert_{w_1 = z_0}\right] \left[\vert_{w_2 = z_0}\right] \Delta_{w_1} \Delta_{w_2} \cW_{g,2,n}(w_1,w_2 \mid z_{\llbracket n \rrbracket})
\end{equation} is holomorphic at $z_0\to p$. 

Then equation~\eqref{Sondiagonal} and lemma~\ref{lem:lle} (in the case $g=0$, $n=0$ one also has to recall equation~\eqref{eq:redefinition02}) imply that $\cW_{g,2,n}(z_0,\bar{z}_0 \mid z_{\llbracket n \rrbracket})$ is holomorphic at $z_0\to p$ (cf.~also the arguments in~\cite[section 2.4]{BorotShadrin} and~\cite[section 3.2]{r-spinFullProof}).
\end{proof}

\bibliographystyle{alpha}
\bibliography{monotoneR}

\end{document}